\newtheorem{theorem}{Theorem}[section]
\newtheorem{lemma}[theorem]{Lemma}
\newtheorem{corollary}[theorem]{Corollary}
\newtheorem{defn}{Definition}
\newtheorem{remark}[theorem]{Remark}
\newcommand{\lnr}{L_{n}^{\langle r \rangle}(x)}
\newcommand{\Lnr}{\mathcal{L}_{n}^{\langle r \rangle}(x)}
\newcommand{\Dt}{\Delta_{n}^{\langle r \rangle}}
\newcommand{\Q}{\mathbb{Q}}
\newcommand{\N}{\mathbb{N}}
\newcommand{\R}{\mathbb{R}}
\newcommand{\Z}{\mathbb{Z}}
\newcommand{\s}{\sigma}
\renewcommand{\a}{\alpha}
\begin{document}
\title[Irreducibility and Galois Groups of $L_{n}^{(-1-n-r)}(x)$]{Irreducibility and Galois Groups of Generalized Laguerre Polynomials $L_{n}^{(-1-n-r)}(x)$}
\author[Jindal, Laishram]{Ankita Jindal, Shanta Laishram}
\address{Department of Mathematics\\
IIT Delhi, New Delhi 110016, India}
\email{ankitajindal1203@gmail.com}
\author[Sarma]{Ritumoni Sarma}
\address{Stat-Math Unit, Indian Statistical Institute\\
7 S. J. S. Sansanwal Marg, New Delhi, 110016, India}
\email{shanta@isid.ac.in}
\address{Department of Mathematics\\
IIT Delhi, New Delhi 110016, India}
\email{ritumoni@maths.iitd.ac.in}

\dedicatory{Dedicated to Professor T. N. Shorey on his 70th birthday}
\thanks{2000 Mathematics Subject Classification: Primary 11A41, 11B25, 11N05, 11N13, 11C08, 11Z05.\\
Keywords: Generalized Laguerre Polynomials, Irreducibility, Galois Groups, Primes, Valuations, Newton Polygons, Squares.}
\begin{abstract}
We study the algebraic properties of Generalized Laguerre polynomials for negative integral values of a given parameter which is $L_{n}^{(-1-n-r)}(x)= \sum\limits_{j=0}^{n} \binom{n-j+r}{n-j} \frac{x^{j}}{j!}$
for integers $r\geq 0, n\geq 1$. For different values of parameter $r$, this family provides polynomials which are of great interest. Hajir conjectured that for integers $r\geq 0$ and $n\geq 1$, $L_{n}^{(-1-n-r)}(x)$ is an irreducible polynomial whose Galois group contains $A_n$, the alternating group on $n$ symbols. Extending earlier results of Schur, Hajir, Sell, Nair and Shorey, we confirm this conjecture for all $r\leq 60$.
We also prove that $L_{n}^{(-1-n-r)}(x)$ is an irreducible polynomial whose Galois group contains $A_n$ whenever $n>e^{r\left(1+\frac{1.2762}{{\rm log } r}\right)}$. 
\end{abstract}
\maketitle
\pagenumbering{arabic}
\pagestyle{headings}

\section{Introduction}
For an arbitrary real number $\a$ and a positive integer $n,$ the Generalized Laguerre Polynomials (GLP) is a family of polynomials defined by
	\begin{align*}
	L_{n}^{(\a)}(x) &= (-1)^{n} \sum\limits_{j=0}^{n} \binom{n+\a}{n-j} \frac{(-x)^{j}}{j!}.
	\end{align*}
The inclusion of the sign $(-1)^n$ is not standard. The corresponding monic polynomial is obtained as $\mathcal{L}_{n}^{(\a)}(x) = n! L_{n}^{(\a)}(x)$.
These classical orthogonal polynomials play an important role in various branches of analysis and mathematical physics and has been well studied. Schur \cite{schur1973a},
\cite{schur1973b} was the first to study the algebraic properties of these polynomials by proving that $L_{n}^{(\alpha)}(x)$ where $\alpha \in \{0,1,-n-1\}$ are irreducible.
For an account of results obtained on GLP, we refer to Hajir \cite{hajir2009} and Filaseta, Kidd and Trifonov \cite{fkt}.

In this paper, we study $\alpha$ at negative integral values via a parameter $r$. For integers $r\geq 0$, we consider
	\begin{align*}
	\lnr &:= L_{n}^{(-1-n-r)}(x)\\
	&= (-1)^{n} \sum\limits_{j=0}^{n} \binom{-1-r}{n-j} \frac{(-x)^{j}}{j!}\\
	&=  \sum\limits_{j=0}^{n} \binom{n-j+r}{n-j} \frac{x^{j}}{j!}.
	\end{align*}
	
By a factor of a polynomial, we always mean its factor over $\Q$. We observe that $\Lnr := n! \lnr= \sum\limits_{j=0}^{n} \binom{n}{j} (r+1) \dots (r+n-j) x^j$ is a monic polynomial with integer coefficients and $\lnr$ is irreducible if and only if $\Lnr$ is irreducible. Schur \cite{schur1973b} computed the discriminant of $\Lnr$ which is
	\begin{align*}
	\Dt=\prod\limits_{j=2}^{n}j^j(-1-n-r+j)^{j-1}.
	\end{align*}
Let $G_n(r)$ denote the Galois group of $\Lnr$ over $\Q$. Let $S_n$ denote the symmetric group on $n$ symbols and $A_n$, the alternating group on $n$ symbols. Schur \cite{schur1973a, schur1973b} and Coleman \cite{coleman1987} used two different techniques to prove that $L_{n}^{\langle 0 \rangle}(x)$ is irreducible and $G_n(0)=S_n$ for every $n$. Hajir \cite{hajir1995} proved that $L_{n}^{\langle 1 \rangle}(x)$ is irreducible and $G_n(1)$ is $A_n$ if $n \equiv 1 \pmod 4$ and is $S_n$, otherwise. Sell \cite{sell2004} proved that $L_{n}^{\langle 2 \rangle}(x)$ is irreducible and $G_n(2)$ is $A_n$ if $n+1$ is an odd square and is $S_n$, otherwise.

The irreducibility of $L_{n}^{\langle n \rangle}(x)$, also known as Bessel polynomials, was conjectured for all $n$ by Grosswald \cite{grosswald} and assuming his conjecture he proved that the Galois group is $S_n$ for every $n$. The irreducibility of all Bessel polynomials was proved, first for all but finitely many $n$ by Filaseta \cite{filaseta1995} and later for all $n$ by Filaseta and Trifonov \cite{filaseta_trifonov2002}.

Hajir \cite{hajir2009} conjectured that for integers $r\geq0$ and $n\geq1$, $\lnr$ is irreducible and $G_n(r)$ contains $A_n$. It was also proved in \cite{hajir2009} that if $r$ is a fixed integer in the range $0\leq r \leq 8$, then for all $n\geq 1$, $\lnr$ is irreducible and has Galois group containing $A_n$. This was extended by Nair and Shorey \cite{nair} who proved the following.\\

\noindent\textbf{Theorem A.}
	{\it For $n\geq 1$,
	\begin{itemize}
		\item [$(i)$] $\lnr$ is irreducible for $3 \leq r \leq 22$.
		\item [$(ii)$] For $9 \leq r \leq 22$, $G_n(r)=S_n$ unless $(n,r)\in \{(8,9), (12,13), (13,16), (16,17),$ $(17,18), (20,21)\}$ in which case $G_n(r)=A_n$. For $3 \leq r \leq 8$, $G_n(r)=S_n$ unless $(n,r)\in \{(2,3), (24,4),$ $ (4,5), (6,7), (7,8), (9,8), (2,8)\}$ or\\
		$r=3;$ $n \equiv 1 \pmod {24}$ and $\frac{n+2}{3}$ is a square\\
		$r=4;$ $n+2$ is a rational part of $(2+\sqrt{3})^{2k+1}$ where $k\geq 0$ is an integer\\
		$r=5;$ $n+3$ is a rational part of $(4+\sqrt{15})^{2k+1}$ where $k\geq 0$ is an integer\\
		in which case $G_n(r)=A_n$.
	\end{itemize}
}

We further extend this work to confirm the conjecture of Hajir for all $r\leq 60$. We prove
\begin{theorem} \label{mainthm1}
	For $n \geq 1$ and $23 \leq r \leq 60$, we have
	\begin{itemize}
		\item [$(i)$] $\lnr$ is irreducible.
		\item [$(ii)$] $G_n(r)=S_n$ unless $(n,r)\in \{(4,24),(5,28),(24,25),(25,24),(28,23),(28,29)$, $(32,33), (33,36), (36,37),(40,41),(44,45),(48,49),(48,51),(49,48),(49,50)$, \\ $(52,53)$, $(56,57)\}$ in which case $G_n(r)=A_n$.
	\end{itemize}
\end{theorem}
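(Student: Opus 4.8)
The plan is to follow the Newton-polygon framework of Schur, Hajir, and Nair--Shorey, treating the two assertions separately and using the explicit discriminant $\Dt$ recorded in the introduction as the bridge between them. Throughout I work with the monic integral polynomial $\Lnr=\sum_{j=0}^{n}a_{j}x^{j}$, where the coefficient of $x^{j}$ is $a_{j}=\frac{n!}{j!}\binom{n-j+r}{r}$.

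For part $(i)$ I would show that $\Lnr$ has no rational factor of degree $k$ for any $1\le k\le n/2$. The engine is the $p$-adic Newton polygon: the slopes and horizontal lengths of its lower edges control the degrees of the $\Q_{p}$-irreducible factors and hence constrain the degrees of possible $\Q$-factors. Two families of primes carry the argument. First, primes $p$ with $n<p\le n+r$: for such $p$ one computes $v_{p}(a_{j})=1$ exactly when $j\le n+r-p$ and $v_{p}(a_{j})=0$ otherwise, producing a single tamely ramified segment of denominator $n+r-p+1$ that forces one irreducible factor to absorb a block of that degree. Second, primes $p\le n$, for which $v_{p}(a_{j})$ is read off from $v_{p}(n!/j!)$ together with the carry count in $\binom{n-j+r}{r}$; these are the analogue of Schur's primes in $(n/2,n]$ for the truncated exponential (the case $r=0$, where $\Lnr=n!\sum x^{j}/j!$) and are what rule out factors of large degree. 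Combining the constraints coming from several such primes should eliminate every admissible factor degree.

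The existence of the requisite primes is precisely where the threshold $n>e^{r(1+1.2762/\log r)}$ enters: above it, explicit estimates on primes in short intervals, respectively on the greatest prime factor of $\prod_{i=1}^{m}(r+i)$, guarantee a prime in the interval needed for each $k$ and give a uniform argument; below it one is left with a bounded set of pairs $(n,r)$, which additional divisibility criteria cut down to genuinely small $n$ to be checked directly, so that $(i)$ holds throughout $23\le r\le60$. For part $(ii)$, irreducibility makes $G_{n}(r)$ transitive, and I would first establish $A_{n}\subseteq G_{n}(r)$: a Newton-polygon segment of prime denominator (from a prime $p\in(n,n+r]$, or from a prime $p\le n$ isolating a single large prime factor) yields an element of $G_{n}(r)$ containing a $q$-cycle with $q$ prime and $n/2<q\le n-3$, whence $A_{n}\subseteq G_{n}(r)$ by the theorem of Jordan in the form used by Hajir and Nair--Shorey. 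To decide $A_{n}$ versus $S_{n}$ one tests whether $\Dt$ is a square in $\Q$: writing $\Dt=(-1)^{\binom{n}{2}}\prod_{j=2}^{n}j^{\,j}\prod_{m=r+1}^{n+r-1}m^{\,n+r-m}$, squareness forces $\binom{n}{2}$ even and imposes a condition on the squarefree parts of $\prod_{3\le j\le n,\ j\ \mathrm{odd}}j$ and of the second product; $G_{n}(r)=A_{n}$ holds exactly when this condition is met, and $G_{n}(r)=S_{n}$ otherwise.

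The main obstacle I anticipate is twofold. On the irreducibility side, the genuinely hard regime is the intermediate range of $n$ below the explicit threshold, where no single prime suffices and one must orchestrate several Newton polygons while keeping the residual verification finite. On the Galois side, the crux is to solve the squareness condition \emph{completely}: after reducing it to a norm-form/Pell-type equation in $n$ for each fixed $r$ (as already visible for $r=3,4,5$ in Theorem A, where \emph{infinite} families appear), one must prove that for every $r$ with $23\le r\le60$ the only solutions are the finitely many pairs listed in the statement, i.e.\ that the relevant equations admit no large solutions. Certifying this finiteness and completeness, rather than the routine Newton-polygon bookkeeping, is where the real work lies.
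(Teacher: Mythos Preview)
Your overall architecture—Newton polygons for irreducibility, then the discriminant to separate $A_n$ from $S_n$—is the paper's, but two of your key mechanisms are off, and each would leave the proof incomplete.

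For part~$(i)$, the threshold $n>e^{r(1+1.2762/\log r)}$ is not the finiteness device here; that bound is the \emph{conclusion} of Theorem~1.2, and for $r=60$ it would leave on the order of $e^{78}$ values of $n$ below it, far beyond any direct check. The actual reduction is arithmetic, not analytic: write $n=n_0n_1$ with $n_1$ the part of $n$ built from primes dividing $\binom{n+r}{r}$. Hajir's lemma gives $n_0\mid k$ for any factor degree $k$, and Nair--Shorey's bound $r>3.42k+1$ forces $n_0\le\lfloor(r-1)/3.42\rfloor$, while Lemma~2.10 gives $p^{\nu_p(n)}\le r$ for every $p\mid n_1$. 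The paper's new contribution (Lemma~3.1/Corollary~3.2) adds the congruence constraint that if $p\mid n_1$ and $r<p^2$ then $n/p\equiv -j\pmod p$ for some $1\le j\le\lfloor r/p\rfloor$; together these carve out a small explicit set $H_r$. The Newton polygons then use the largest prime $p\le n$ (to exclude degrees $>n-p$) and primes dividing $n(n-1)\cdots(n-i+1)(r+1)\cdots(r+i)$ for $i\le 3$; primes in $(n,n+r]$ are not used at all.

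For part~$(ii)$, the anticipation of Pell-type equations is the more serious misdirection. That mechanism is what produces the infinite $A_n$-families for $r\in\{3,4,5\}$ in Theorem~A, but the whole point for $r\ge 23$ is that the situation changes qualitatively. In the nontrivial parity cases one has $\Dt\sim y_r\cdot(n+2)(n+4)\cdots(n+r-1)$ (or the analogous product with $n+1,\dots$), a product of at least eleven consecutive odd integers. Sieving out the terms divisible by each prime $7\le p\le r$ occurring in $y_r$ still leaves at least six terms whose squarefree parts must lie in $\{1,3,5,15\}$; by pigeonhole two of them coincide, so two distinct terms of the progression have square product. The elementary Lemma~3.7 (for $m\ge197$, no two of $m+2,\dots,m+t$ with $t\le60$ have square product) then gives a contradiction. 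Thus $\Dt$ is never a square for large $n$, with no diophantine equation to solve; all the listed exceptional $(n,r)$ arise from the direct MAGMA computation on the finite sets $B_r$ of small $n$.
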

The proof of Theorem \ref{mainthm1} is given in Sections \ref{proofpart1} and \ref{proofpart2}. We see that Theorem \ref{mainthm1} considerably extends earlier results of \cite{hajir2009} and \cite{nair}.
The new ingredients in the proof are Lemma 3.1 which arise from clever and important observations on prime divisors of $n$ and $\binom{n+r}{r}$ and Lemmas 3.4-3.6 which arise from an application of p-adic Newton polygons. These results are general in nature and make our computations much less. In fact, for checking irreducibility of $\lnr$, we need to exclude factors of degrees up to $3$ which can be handled easily. The observations also imply the following result which improves the bound for $n$ given by Hajir \cite{hajir2009} and Nair and Shorey \cite{nair}.

\begin{theorem} \label{mainthm2}
	For $n \geq 1$ and $r\geq 0$, $\lnr$ is irreducible and $G_n(r)$ contains $A_n$ if
	$$n>e^{r\left(1+\frac{1.2762}{{\rm log } r}\right)}.$$
\end{theorem}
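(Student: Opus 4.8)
The plan is to prove Theorem~\ref{mainthm2} by reducing irreducibility and the containment $A_n \subseteq G_n(r)$ to a statement about the existence of a suitable prime, and then to extract that prime from a careful analysis of the discriminant formula combined with sharp bounds on primes in intervals. The key classical input (used by Schur, Hajir, and Nair--Shorey) is that if there is a prime $p$ with $\tfrac{n}{2} < p \le n$ such that $p$ divides the discriminant $\Dt$ to an odd power (equivalently, $p \mid (-1-n-r+j)$ for exactly one $j$ with $2 \le j \le n$ and to the right parity), then $p$ contributes a transposition-type element to the Galois group, forcing a factor of large degree and ultimately $A_n \subseteq G_n(r)$. So the whole theorem hinges on guaranteeing a prime $p$ in a window just below $n$ that divides exactly one of the linear factors $-1-n-r+j = j-(n+r+1)$, i.e.\ a prime $p \in (\tfrac{n}{2}, n]$ with $n < p + n + r + 1 - j$ arranged so that $p \mid (n+r+1-j)$ for a unique admissible $j$.

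**First I would** make precise the arithmetic condition. Writing the linear factors in $\Dt$ as $n+r+1-j$ for $2 \le j \le n$, a prime $p$ divides such a factor iff $p \mid (n+r+1-j)$ for some $j \in [2,n]$, i.e.\ iff $n+r+1-j$ ranges over a block of $n-1$ consecutive integers $[r+1, n+r-1]$. The prime $p$ divides \emph{exactly one} of these, with the correct odd multiplicity, precisely when $p$ lies in a controlled range and no multiple of $p$ collides inside the block; the cleanest sufficient condition is to find a prime $p$ with $n < p \le n+r-1$ (so that $p$ itself lands in the block $[r+1,n+r-1]$ as the unique multiple of $p$ there, since $2p > 2n > n+r-1$ once $n > r$). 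Such a $p$ then divides $n+r+1-j$ for the single index $j = n+r+1-p \in [2,n]$, and one checks the parity of its exponent $j-1$ gives the transposition/irreducibility conclusion via the earlier Schur--Hajir machinery and the p-adic Newton polygon lemmas (Lemmas~3.4--3.6) cited in the excerpt.

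**The hard part will be** converting the bound $n > e^{r(1 + 1.2762/\log r)}$ into a \emph{guarantee} that the interval $(n, n+r-1]$ — or more precisely the relevant window dictated by the Newton-polygon criterion — contains a prime dividing exactly one factor, and ruling out small-degree factors simultaneously. Here I would invoke explicit effective prime-counting estimates: by results of the Rosser--Schoenfeld / Dusart type, for $x$ beyond an explicit threshold the interval $(x, x(1+\tfrac{1}{c\log x})]$ contains a prime. The constant $1.2762$ is exactly the sort of number that emerges from such an explicit bound: setting $n = e^{r(1+\delta/\log r)}$ one gets $\log n = r + r\delta/\log r$, so $r = \log n - r\delta/\log r$ and the gap $r-1 \approx \log n$, matching the Bertrand-type window $n(1 + O(1/\log n))$ in which a prime is guaranteed. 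I would therefore set up the inequality $r - 1 \ge n \cdot \tfrac{K}{\log n}$ (with $K$ the explicit Dusart constant ensuring a prime in $(n, n(1+K/\log n)]$), solve for the threshold on $n$, and verify algebraically that $n > e^{r(1+1.2762/\log r)}$ implies it; the number $1.2762$ should fall out as $\log$-of-the-explicit-constant adjusted for the $r/\log r$ correction term.

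**To finish**, I would combine: (1) the produced prime $p \in (n, n+r-1]$ yields, via the Newton polygon / Schur criterion, both irreducibility of $\lnr$ and an element of $G_n(r)$ acting as a long-ish cycle that, together with the primitivity coming from irreducibility, forces $A_n \subseteq G_n(r)$; and (2) the exceptional small cases (small $n$ relative to $r$, small-degree factors) are excluded because the bound $n > e^{r(1+1.2762/\log r)}$ is so large that $n \gg r$, placing us safely in the regime where the single-prime argument applies cleanly and no degenerate factorizations survive. I expect the main technical obstacle to be pinning down the exact explicit prime-gap constant and chasing it through the exponential-to-linear reparametrization so that the final clean threshold is precisely $e^{r(1+1.2762/\log r)}$ rather than a messier expression.
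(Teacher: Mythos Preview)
Your approach has a fundamental gap: you are trying to guarantee a prime in the interval $(n, n+r-1]$, but under the hypothesis $n > e^{r(1+1.2762/\log r)}$ this interval has length $r-1 \approx \log n$, and no known explicit result produces a prime in an interval of length $O(\log n)$ around $n$. Dusart-type results give a prime in $(x, x(1+K/\log x)]$, an interval of length $x/(c\log x)$; with $x=n$ and $\log n \approx r$ this length is roughly $e^r/r$, hopelessly larger than $r$. Your proposed inequality $r-1 \ge n\cdot K/\log n$ is therefore violently false in this regime, and the algebra cannot be made to close. The constant $1.2762$ does \emph{not} originate from a prime-gap statement.

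The paper's argument is entirely different and much shorter. It is a contrapositive based on the structure of $n=n_0n_1$ (Lemma~\ref{degree} and Lemma~\ref{ppower}): if $\lnr$ has a factor of degree $k$, then $n_0\mid k$ and $k<r/1.63$ (Lemma~\ref{rk1}), so every prime power $p^{\nu_p(n)}$ dividing $n$ is at most $r$. Hence
\[
n=\prod_{p\mid n} p^{\nu_p(n)} \le \prod_{p\le r} r = r^{\pi(r)} = e^{\pi(r)\log r} \le e^{r\left(1+\frac{1.2762}{\log r}\right)},
\]
the last inequality being Dusart's explicit bound $\pi(r)\le \tfrac{r}{\log r}(1+1.2762/\log r)$ (Lemma~\ref{pi}). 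This is where $1.2762$ actually comes from. Thus if $n$ exceeds this bound, $\lnr$ is irreducible. The Galois-group containment then follows immediately from the existing criterion Lemma~\ref{lemma for gp}(ii), since $e^{r(1+1.2762/\log r)} > 8+5r/3$ for $r\ge 61$ (smaller $r$ being covered by Theorem~\ref{mainthm1} and earlier work). No prime in a short interval near $n$ is ever sought.
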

We prove Theorem \ref{mainthm2} in Section \ref{proof of mainthm2}.

The computations in this paper are carried out with SAGE except for computing a few Galois groups in Section 5 for which MAGMA online is used.

\section{Preliminaries}
Henceforth, we always use $p$ for a prime and $n$, $r$ for integers with $r\geq 0$ and $n \geq 1$ unless otherwise specified.
\begin{defn}
	The {\it p-adic valuation} of an integer $m$ with respect to $p$, denoted by $\nu_p(m)$, is defined as
		\begin{align*}
		\nu_p(m)&=\begin{cases}
				\max\{k:p^k|m\}& {\rm if} \ m\ne 0, \\
				\infty& {\rm if} \ m=0.
				\end{cases}
		\end{align*}
\end{defn}
\begin{defn}
	Let $m$ be a positive integer. Let $m=m_0+m_1p+\cdots+m_tp^t$ with $m_t\ne 0$  be the p-adic representation of $m$. We define $\s_p(m):=m_0+m_1+\cdots+m_t$.
\end{defn}
For integers $m\geq 1$ and $t\geq 0$, we have
	\begin{align*}
	\nu_p(m!)&=\frac{m-\s_p(m)}{p-1},\\
	{\rm and~} \nu_{p} \left( \binom{m}{t} \right) &= \frac {\s_p(t) + \s_p(m-t) -\s_p(m) }{p-1}.
	\end{align*}
These are well known results of Legendre \cite{legendre}.

\begin{defn}
	Let $f(x)= \sum\limits_{j=0}^{n}a_{j}x^{j} \in \Z[x]$ with $a_{o}a_{n} \neq 0$. We consider the set
	$$S=\{(0,\nu_p(a_n)), (1,\nu_p(a_{n-1})), \dots, (n,\nu_p(a_0)) \}$$
	consisting of points in the extended plane $\R^{2} \cup \{ \infty \}$ . The polygonal path formed by the lower edges along the convex hull of $S$ is called the {\it Newton polygon} associated to $f(x)$ with respect to prime $p$ and is denoted by $NP_p(f(x))$.
\end{defn}
It can be observed that the left-most edge has one end point being $(0,\nu_p(a_n))$ and the right-most edge has $(n,\nu_p(a_0))$ as an end point. The end points of every edge belong to the set $S$. Thus every point in $S$ lies either on or above the line obtained by extending such an edge. In particular, if $(i,\nu_p(a_{n-i}))$ and $(j,\nu_p(a_{n-j}))$ are the two end-points of such an edge, then every point $(u,\nu_p(a_{n-u}))$ with $i < u < j$ lies on or above the line passing through $(i,\nu_p(a_{n-i}))$ and $(j,\nu_p(a_{n-j}))$. Also the slopes of the edges are always increasing when calculated from the left-most edge to the right-most edge.

The following result is due to Dumas \cite{dumas}.
\begin{lemma} \label{dumas}
	Let $g(x), h(x) \in \Z[x]$ with $g(0)h(0) \neq 0$, and let $p$ be a prime. Let $k$ be a non-negative integer s.t. $p^{t}$ divides the leading coefficient of $g(x)h(x)$ but $p^{t+1}$ does not. Then the edges of the Newton Polygon for $g(x)h(x)$ with respect to $p$ can be formed by constructing a polygonal path beginning at $(0,t)$ and using translates of the edges in the Newton Polygons for $g(x)$ and $h(x)$ with respect to $p$ (using exactly one translate for each edge). Necessarily, the translated edges are translated in such a way as to form a polygonal path with the slopes of the edges increasing. 
\end{lemma}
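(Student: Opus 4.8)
The plan is to prove the sharper statement that the Newton polygon of the product is the \emph{infimal convolution} of the Newton polygons of the factors; the description in the lemma---translating the edges of $NP_p(g(x))$ and $NP_p(h(x))$ and reassembling them into a single polygonal path with increasing slopes---is exactly the geometric meaning of this operation on convex piecewise-linear functions. Since Definition 3 indexes the horizontal axis by co-degree, while the product $a_k=\sum_{i+j=k}b_ic_j$ is most naturally indexed by degree, I first reduce to the degree-indexed picture: the two conventions differ by the affine reflection $i\mapsto \deg-i$, which carries infimal convolutions to infimal convolutions, so it is harmless to work with the points $(m,\nu_p(e_m))$ for a polynomial $\sum_m e_m x^m$. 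Writing $g(x)=\sum_i b_ix^i$, $h(x)=\sum_j c_jx^j$ and $f(x)=g(x)h(x)=\sum_k a_kx^k$, let $N_g,N_h,N_f$ be the convex piecewise-linear functions whose graphs are these (degree-indexed) Newton polygons; by definition $N_g$ is the greatest convex function lying on or below every point $(i,\nu_p(b_i))$, and similarly for $N_h,N_f$. Set
\[
\Phi(k):=\min_{i+j=k}\bigl(N_g(i)+N_h(j)\bigr),
\]
the infimal convolution of $N_g$ and $N_h$. This $\Phi$ is convex and piecewise-linear, its edges are precisely the edges of $N_g$ and of $N_h$ sorted by increasing slope, and its top-degree endpoint is $\bigl(\deg f,\ \nu_p(b_{\deg g})+\nu_p(c_{\deg h})\bigr)=(\deg f,t)$, which corresponds to the starting vertex $(0,t)$ of the statement once the reflection is undone. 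Thus everything reduces to proving $N_f=\Phi$.

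One inequality is soft. For every $k$,
\[
\nu_p(a_k)=\nu_p\Bigl(\textstyle\sum_{i+j=k}b_ic_j\Bigr)\ \ge\ \min_{i+j=k}\bigl(\nu_p(b_i)+\nu_p(c_j)\bigr)\ \ge\ \min_{i+j=k}\bigl(N_g(i)+N_h(j)\bigr)=\Phi(k),
\]
using $\nu_p(b_i)\ge N_g(i)$ and $\nu_p(c_j)\ge N_h(j)$. Thus $\Phi$ is a convex function lying on or below every point $(k,\nu_p(a_k))$, and since $N_f$ is the greatest such function we obtain $\Phi\le N_f$.

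For the reverse inequality it suffices to check $\nu_p(a_{k})=\Phi(k)$ at every vertex $k^\ast$ of $\Phi$: then $N_f(k^\ast)\le\nu_p(a_{k^\ast})=\Phi(k^\ast)$ at the vertices, and since $N_f$ is convex while $\Phi$ is linear between consecutive vertices, $N_f\le\Phi$ follows everywhere. Fix a vertex $k^\ast$ and choose a real slope $s$ strictly inside the subdifferential of $\Phi$ there; this subdifferential is an open interval bounded by two consecutive slopes of $\Phi$, and because the slopes of $\Phi$ are the union of the slopes of $N_g$ and $N_h$, we may take $s$ equal to no edge slope of $N_g$ or of $N_h$. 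Then $i\mapsto N_g(i)-si$ and $j\mapsto N_h(j)-sj$ each attain their minimum at a \emph{unique} vertex, say $i^\ast$ and $j^\ast$; minimizing $\Phi(k)-sk$ over $k$ gives the unique minimizer $k^\ast$ and forces $i^\ast+j^\ast=k^\ast$ with $\Phi(k^\ast)=N_g(i^\ast)+N_h(j^\ast)$, while every other pair $(i,j)$ with $i+j=k^\ast$ satisfies $N_g(i)+N_h(j)>N_g(i^\ast)+N_h(j^\ast)$. As $i^\ast$ is a vertex of $NP_p(g(x))$ the point $(i^\ast,\nu_p(b_{i^\ast}))$ lies on the polygon, so $\nu_p(b_{i^\ast})=N_g(i^\ast)$, and likewise $\nu_p(c_{j^\ast})=N_h(j^\ast)$. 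Hence in $a_{k^\ast}=\sum_{i+j=k^\ast}b_ic_j$ the single term $b_{i^\ast}c_{j^\ast}$ has valuation $N_g(i^\ast)+N_h(j^\ast)$ whereas every other term has valuation $\ge N_g(i)+N_h(j)>N_g(i^\ast)+N_h(j^\ast)$; the minimal valuation is attained exactly once, there is no cancellation, and $\nu_p(a_{k^\ast})=\Phi(k^\ast)$.

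The main obstacle is precisely this last step: ruling out $p$-adic cancellation at the vertices, which rests on the uniqueness of the decomposition $k^\ast=i^\ast+j^\ast$ into a vertex of $N_g$ and a vertex of $N_h$. Everything else is formal convexity. An alternative route extends $\nu_p$ to a valuation on $\overline{\Q_p}$ and identifies the horizontal length of each edge of slope $\lambda$ with the number of roots of valuation $-\lambda$; Dumas's statement is then immediate from the fact that the multiset of roots of $g(x)h(x)$ is the union of those of $g(x)$ and $h(x)$. I prefer the convolution argument above, however, since it is self-contained and uses nothing beyond the definition of the Newton polygon and the elementary properties of $\nu_p$ already recorded.
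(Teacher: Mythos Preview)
The paper does not prove this lemma at all; it is quoted as a classical result of Dumas and simply cited. So there is no ``paper's own proof'' to compare against.

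Your argument is correct and is one of the standard modern proofs. The identification of $NP_p(gh)$ with the infimal convolution of $NP_p(g)$ and $NP_p(h)$ is exactly the right framework, and the edge-sorting description in the lemma is precisely the geometric content of infimal convolution for piecewise-linear convex functions. The soft direction $\Phi\le N_f$ is immediate from the ultrametric inequality, and your handling of the delicate direction---choosing a generic slope $s$ in the interior of the subdifferential at a vertex $k^\ast$ so that the minimisers $i^\ast,j^\ast$ are unique vertices of $N_g,N_h$, forcing a unique term of minimal valuation in $a_{k^\ast}=\sum b_ic_j$---is clean and complete. (One tiny slip of language: the subdifferential at a vertex is a \emph{closed} interval, not open; what you use is its interior, which is indeed nonempty at a genuine vertex. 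This does not affect the argument.) The observation that consecutive slopes of $\Phi$ leave no slope of $N_g$ or $N_h$ strictly between them is what guarantees such an $s$ exists, and you state this correctly.

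Your remark at the end about the alternative proof---counting roots of each valuation in $\overline{\mathbb{Q}_p}$ and using that the roots of $gh$ are the union of those of $g$ and $h$---is also a valid and common route, closer in spirit to Dumas's original 1906 argument. The convolution proof you give has the advantage of staying entirely within $\mathbb{Z}$ and elementary convexity, with no appeal to algebraic closures or factorisation over $\mathbb{Q}_p$.
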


We also need the following result due to Filaseta \cite[Lemma 2]{filaseta1995} which is a consequence of Lemma \ref{dumas}.

\begin{lemma} \label{application of np}
	Let $k$ and $l$ be integers with $k>l \geq 0$. Suppose $g(x)= \sum\limits_{j=0}^{n} b_{j} x^{j} \in \Z[x]$ and $p$ is a prime such that
	$p \nmid b_{n}$, $p|b_{j}$ for all $j \in \{0,1, \dots, n-l-1\}$ and the right-most edge of the Newton polygon for $g(x)$ with respect to $p$ has slope $< \frac{1}{k}$. Then for any integers $a_{0}, a_{1}, \dots , a_{n}$ with $|a_{0}|=|a_{n}|=1$, the polynomial $f(x)= \sum\limits_{j=0}^{n} a_{j} b_{j} x^{j}$ cannot have a factor with degree in the interval $[l+1, k]$.
\end{lemma}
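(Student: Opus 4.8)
The plan is to deduce the lemma from Dumas's result (Lemma \ref{dumas}) by a Newton polygon argument, working in the convention of the preceding definition, where the point with abscissa $i$ records the $p$-adic valuation of the coefficient of $x^{n-i}$. First I would record what the hypotheses say about $NP_p(g(x))$. Since $p\nmid b_n$, the left endpoint is $(0,0)$. Since $p\mid b_j$ for $0\le j\le n-l-1$, every point of $g$ with abscissa $i\ge l+1$ has ordinate $\nu_p(b_{n-i})\ge 1$. And since the right-most edge has slope $<\frac1k$ and slopes increase from left to right along a Newton polygon, \emph{every} edge of $NP_p(g(x))$ has slope $<\frac1k$.

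Next I would pass from $g$ to $f(x)=\sum_{j} a_jb_jx^j$. Because $|a_0|=|a_n|=1$ we have $\nu_p(a_jb_j)=\nu_p(b_j)$ at $j=0$ and $j=n$, so $f$ and $g$ share the endpoints $(0,0)$ and $(n,\nu_p(b_0))$; and because $\nu_p(a_jb_j)\ge\nu_p(b_j)$ for every $j$, the polygon $NP_p(f(x))$ lies on or above $NP_p(g(x))$. Viewing both polygons as convex piecewise-linear functions on $[0,n]$ with the same value at the right endpoint, I would compare right-most slopes $s_f,s_g$: for $x$ near $n$ one has $f(x)=f(n)-s_f(n-x)\ge g(x)=g(n)-s_g(n-x)$ with $f(n)=g(n)$, forcing $s_f\le s_g<\frac1k$; by convexity every edge of $NP_p(f(x))$ then has slope $<\frac1k$. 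I would also note that any horizontal (slope-$0$) edge of $NP_p(f(x))$ terminates at an abscissa $x_0\le l$, since a vertex of ordinate $0$ cannot occur at abscissa $\ge l+1$ by the divisibility just recorded.

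Finally I would invoke Dumas. Suppose $f=uv$ in $\Z[x]$ (by Gauss's lemma a factor over $\Q$ yields one over $\Z$, and $p\nmid a_nb_n$, so $p$ divides neither leading coefficient, putting $t=0$ in Lemma \ref{dumas}). By that lemma the edges of $NP_p(f(x))$ are obtained by concatenating, in order of increasing slope, translates of the edges of $NP_p(u(x))$ and $NP_p(v(x))$; hence $\deg u$ is the sum, over the edges of $NP_p(f(x))$, of the horizontal lengths contributed by $u$, and along an edge of slope $\frac{b}{a}$ in lowest terms each such contribution is a nonnegative multiple of $a$. The slope-$0$ edge contributes at most $x_0\le l$; every other edge has slope $\frac{b}{a}\in\left(0,\frac1k\right)$, so $a>kb\ge k$, i.e.\ $a\ge k+1$, and its nonzero contributions are $\ge k+1$. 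Therefore $\deg u\le l$ or $\deg u\ge k+1$, so $\deg u\notin[l+1,k]$; as $u$ was an arbitrary factor, $f$ has no factor of degree in $[l+1,k]$.

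The step I expect to be the main obstacle is the transfer of the slope bound from $g$ to $f$: raising the interior coefficients only pushes $NP_p(f(x))$ \emph{above} $NP_p(g(x))$, so one cannot simply inherit ``all slopes $<\frac1k$'', and it is precisely the equality of right endpoints together with convexity that controls the (steepest) right-most slope. Once this is secured, the remaining bookkeeping -- that a factor degree decomposes into multiples of the edge denominators -- is routine given Lemma \ref{dumas}.
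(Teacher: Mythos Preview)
The paper does not supply its own proof of this lemma; it records it as a known result of Filaseta \cite[Lemma~2]{filaseta1995} and moves on. Your argument is correct and is essentially the standard proof: the endpoint equalities at $(0,0)$ and $(n,\nu_p(b_0))$ together with $\nu_p(a_jb_j)\ge\nu_p(b_j)$ force the right-most (hence every) slope of $NP_p(f)$ to be at most that of $NP_p(g)$, the divisibility hypothesis confines any horizontal edge to abscissae $\le l$, and Dumas's lemma then shows that any factor of $f$ has degree assembled from a piece of length $\le l$ plus multiples of denominators $\ge k+1$. One small remark: your convexity/``near the right endpoint'' justification for $s_f\le s_g$ is fine, but the computation
\[
s_f=\max_{0\le i<n}\frac{\nu_p(b_0)-\nu_p(a_{n-i}b_{n-i})}{n-i}\le \max_{0\le i<n}\frac{\nu_p(b_0)-\nu_p(b_{n-i})}{n-i}=s_g
\]
is more direct and avoids any ambiguity about what ``$NP_p(f)$ lies above $NP_p(g)$'' means for the slopes.
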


In this paper, we use Lemma \ref{application of np} with $a_0=a_1=\cdots =a_n=1$ always.
\begin{defn}
	Given $f\in \Q[x]$, we define the Newton Index of $f$, denoted by $\mathcal{N}_f$, to be the least common multiple of the denominators (in lowest terms) of all slopes of $NP_p(f(x))$ as $p$ ranges over all primes.
\end{defn}

The following results by Hajir \cite[Theorem 2.2]{hajir2005} are used for calculating the Galois groups of polynomials.
\begin{lemma}\label{newtonindex}
	Given an irreducible polynomial $f\in \Q[x]$, $\mathcal{N}_f$ divides the order of the Galois group of $f$. Moreover, if $\mathcal{N}_f$ has a prime divisor $q$ in the range $\frac{n}{2} < q < n-2$, where $n$ is the degree of $f$, then the Galois group of $f$ contains $A_n$.
\end{lemma}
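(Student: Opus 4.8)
The plan is to read both assertions off the local ramification data carried by the Newton polygons, and then to finish the second one with a classical result on permutation groups. Let $L$ be the splitting field of $f$ over $\Q$ and $G=\mathrm{Gal}(L/\Q)$; since $f$ is irreducible, $G$ acts transitively on the $n$ roots of $f$, so we regard $G$ as a transitive subgroup of $S_n$. The single local input I would use repeatedly is the standard interpretation of the slopes of $NP_p(f)$ over $\Q_p$: if an edge of $NP_p(f)$ has slope $a/b$ in lowest terms, then $f$ has a root $\alpha\in\overline{\Q_p}$ with $\nu_p(\alpha)=-a/b$ (normalising $\nu_p(p)=1$). As the value group of $\Q_p(\alpha)$ is $\tfrac1e\Z$ with $e=e(\Q_p(\alpha)/\Q_p)$ the ramification index, the reduced denominator $b$ must divide $e$.

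For the first assertion I would argue as follows. Fix a prime $\mathfrak P$ of $L$ lying over $p$. The ramification index $e(\Q_p(\alpha)/\Q_p)$ divides $e(\mathfrak P/p)$, and $e(\mathfrak P/p)$ is the order of the inertia subgroup $I_{\mathfrak P}\le G$, hence divides $|G|$. Therefore every slope denominator $b$, taken over every prime $p$, divides $|G|$. Since the least common multiple of a collection of divisors of $|G|$ is again a divisor of $|G|$, we obtain $\mathcal N_f\mid |G|$, which is the first claim.

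For the second assertion, suppose a prime $q$ divides $\mathcal N_f$ with $\tfrac n2<q<n-2$. Then $q$ divides some reduced slope denominator $b$, so by the above $q\mid e(\mathfrak P/p)=|I_{\mathfrak P}|$ for a suitable $\mathfrak P$. By Cauchy's theorem $I_{\mathfrak P}\le G$ contains an element $\s$ of order $q$. Viewed in $S_n$, $\s$ has order $q$, so all its nontrivial cycles have length $q$; if it had $k\ge 1$ such cycles they would move $kq\le n$ points, and $q>\tfrac n2$ forces $k=1$, so $\s$ is a single $q$-cycle. Next I would check that $G$ is primitive: if $G$ preserved a nontrivial block system with blocks of size $d$ ($1<d<n$, $d\mid n$), then $\s$ either fixes every block setwise --- in which case a nontrivially moved block would have $d\ge q>\tfrac n2$, impossible for a proper divisor of $n$ --- or permutes some blocks in a $q$-cycle, whose $qd>q$ underlying points all lie in the support of $\s$, contradicting $|\mathrm{supp}\,\s|=q$. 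Hence $G$ is primitive. Finally, since $q<n-2$ the $q$-cycle $\s$ fixes at least three points, so by Jordan's classical theorem a primitive subgroup of $S_n$ containing a cycle of prime length at most $n-3$ must contain $A_n$; thus $G\supseteq A_n$.

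I expect the main obstacle to be the passage in the second assertion from ``$q$ divides a slope denominator'' to ``$G$ contains a genuine $q$-cycle'': this requires both the local ramification theory (to produce an order-$q$ element inside an inertia group) and the numerical squeeze $\tfrac n2<q<n-2$ (to promote an order-$q$ element to a single $q$-cycle and to secure primitivity), after which Jordan's theorem does the rest.
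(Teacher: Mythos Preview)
The paper does not give its own proof of this lemma; it is simply quoted as \cite[Theorem 2.2]{hajir2005}. Your argument is correct and is essentially the standard proof one finds in Hajir's work: the reduced denominator of each Newton slope divides a ramification index in the splitting field and hence divides $|G|$, giving $\mathcal N_f\mid |G|$; then Cauchy's theorem and the inequality $q>n/2$ force any order-$q$ element to be a single $q$-cycle, transitivity together with this cycle yields primitivity by the block-size squeeze you describe, and Jordan's theorem (a primitive group containing a prime cycle of length at most $n-3$ contains $A_n$) finishes. One cosmetic remark: with the paper's Newton polygon convention the leftmost vertex sits over the leading coefficient, so the slopes equal $\nu_p(\alpha)$ rather than $-\nu_p(\alpha)$; this sign is immaterial to your denominator argument.
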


As a consequence of Lemma \ref{newtonindex}, Hajir \cite[Theorem 5.4]{hajir2009} proved the following result.
\begin{lemma} \label{lemma for gp}
  Let $\lnr$ be irreducible.
	\begin{itemize}
		\item[$(i)$] If there exists a prime $p$ satisfying $\frac{n+r}{2}<p<n-2$, then $G_n(r)$ contains $A_n$.
		\item[$(ii)$] If $n \geq \max\{48-r,8+\frac{5r}{3}\}$, then $G_n(r)$ contains $A_n$.
		\item[$(iii)$] If $G_n(r)$ contains $A_n$, then
		$$G_n(r)=
		\begin{cases}
		A_n & {\rm if} \ \Dt \ {\rm is \ a \ square},\\
		S_n & {\rm otherwise.}
		\end{cases}
		$$
	\end{itemize}
\end{lemma}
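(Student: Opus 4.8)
The three parts have quite different flavours, so I would treat them separately, beginning with the purely group-theoretic part (iii). Here I would invoke two standard facts. First, since $\lnr$ (equivalently $\Lnr$) is assumed irreducible it is separable over $\Q$, so $G_n(r)$ is a transitive subgroup of $S_n$; because $[S_n:A_n]=2$, the only subgroups of $S_n$ containing $A_n$ are $A_n$ and $S_n$, whence the hypothesis $A_n\subseteq G_n(r)$ forces $G_n(r)\in\{A_n,S_n\}$. Second, the classical discriminant criterion says $G_n(r)\subseteq A_n$ if and only if the discriminant is a square in $\Q$. As $\Lnr$ is monic with integer coefficients, its discriminant is the integer $\Dt$, and being a square in $\Q$ is the same as being a perfect square in $\Z$. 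Combining the two gives $G_n(r)=A_n$ exactly when $\Dt$ is a square and $G_n(r)=S_n$ otherwise, which is (iii) with no computation beyond knowing $\Dt$.

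The core is part (i), which I would prove by showing the given prime $p$ divides the Newton index $\mathcal{N}_{\Lnr}$ and then quoting Lemma \ref{newtonindex}. Write $\Lnr=\sum_{j=0}^n a_j x^j$ with $a_j=\binom{n}{j}(r+1)(r+2)\cdots(r+n-j)$, so $a_n=1$. I would first note that the hypothesis $\frac{n+r}{2}<p<n-2$ already forces the interval to be nonempty, hence $n>r+4$, and therefore $p>\frac{n+r}{2}>r$ and $2p>n+r$. These two inequalities make the valuations trivial to read off: by Legendre's formulas $\nu_p(r!)=0$, $\nu_p((n-j+r)!)=1$ exactly when $j\le n+r-p$, and the carry (Kummer) description of $\nu_p\binom{n}{j}$ gives $\nu_p\binom{n}{j}=1$ exactly when $n-p<j<p$ and $0$ otherwise. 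Hence
\[
\nu_p(a_j)=[\,n-p<j<p\,]+[\,j\le n+r-p\,],
\]
a function I can describe explicitly on four consecutive blocks of $j$.

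Plotting the points $(n-j,\nu_p(a_j))$ and taking the lower convex hull, I would check that the surviving vertices are $(0,0)$, $(n-p,0)$ and $(n,1)$, so that $NP_p(\Lnr)$ is a slope-$0$ edge followed by a single edge of slope $\frac{1}{p}$ and horizontal length $p$. Since $\gcd(1,p)=1$ this edge contributes denominator $p$, so $p\mid\mathcal{N}_{\Lnr}$. Because $r\ge0$ gives $\frac{n}{2}\le\frac{n+r}{2}<p<n-2$, the prime $p$ lies in the range $\frac{n}{2}<p<n-2$ required by Lemma \ref{newtonindex}, and that lemma yields $A_n\subseteq G_n(r)$. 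The one step demanding care — and the main (though routine) obstacle — is confirming the convex hull: using $2p>n+r$ one must rule out any coefficient divisible by $p^2$ and verify that the height-$2$ block $p-r\le n-j\le p-1$ stays on or above the line of slope $\frac1p$ through $(n-p,0)$ and $(n,1)$.

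Finally, part (ii) I would deduce from (i) by guaranteeing a prime in the open interval $(\frac{n+r}{2},\,n-2)$. Bertrand's postulate is too weak, since no interval $(m,2m)$ fits inside $(\frac{n+r}{2},n-2)$; instead I would invoke an effective primes-in-short-intervals result of Nagura type, which places a prime just above any moderately large $x$ within a ratio close to $1$. Taking $x=\frac{n+r}{2}$, the requirement that the scaled upper endpoint stay below $n-2$, together with the threshold for which the prime result applies, translates after elementary manipulation into two conditions of the shape ``lower endpoint large enough'' and ``interval wide enough'', whose maximum is the stated $\max\{48-r,\,8+\frac{5r}{3}\}$. The expected difficulty here is purely bookkeeping: selecting the precise explicit prime bound and checking the boundary cases so that the constants come out exactly as $48-r$ and $8+\frac{5r}{3}$ rather than their crude first approximations.
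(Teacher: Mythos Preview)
The paper does not prove this lemma itself; immediately before the statement it attributes the result to Hajir \cite[Theorem~5.4]{hajir2009} as a consequence of Lemma~\ref{newtonindex}, and gives no further argument. So there is no in-paper proof to match your proposal against directly.

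That said, your approach to (i) is correct and in fact coincides with what the paper does carry out in a closely related place. The hypothesis $\frac{n+r}{2}<p$ is equivalent to $r+n<2p$, which is precisely the first of the two cases in the paper's own proof of Lemma~\ref{our result on gp} (the sharper variant with hypotheses $\frac{n}{2}<p<n-2$ and $r<p$). There the authors compute the same Newton polygon, obtain the same vertices $(0,0)$, $(n_0,0)=(n-p,0)$ and $(n,1)$, read off the edge of slope $1/p$, and invoke Lemma~\ref{newtonindex}. Your argument for (i) thus reproduces exactly the computation the paper performs for that case. Your treatment of (iii) is the standard discriminant criterion and is what anyone would write. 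Your plan for (ii) is the right shape, and you correctly flag that nailing the specific thresholds $48-r$ and $8+\tfrac{5r}{3}$ requires the particular explicit prime estimate Hajir used rather than a generic Nagura-type input; the paper does not supply those details either.
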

If $\Lnr$ is reducible, it has at least one factor with degree $\in [1,\frac{n}{2}]$. Thus from now onwards, whenever we consider a factor of degree $k$ of $\Lnr$, we mean a factor of degree $k$ with $1 \leq k \leq \frac{n}{2}.$

For fixed integers $r \geq 0$ and $n \geq 1$, we write $n=n_{0}n_{1}$ where
	\begin{align*}
	n_{0} := \prod\limits_{p|n,\ p\nmid \binom{n+r}{r}} p^{\nu_{p}(n)} \textrm{ and } n_{1} := \prod\limits_{p|\gcd(n,\binom{n+r}{r}) } p^{\nu_{p}(n)}.
	\end{align*}

The following result is contained in the first line of the proof of Hajir \cite[Lemma 4.1]{hajir2009}
\begin{lemma} \label{degree}
	Every factor of $\lnr$ has degree divisible by $n_{0}.$
\end{lemma}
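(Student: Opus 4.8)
The plan is to prove the divisibility one prime at a time and then reassemble. Since the prime powers $p^{\nu_p(n)}$, taken over the primes $p$ with $p\mid n$ and $p\nmid\binom{n+r}{r}$, are pairwise coprime and multiply to $n_{0}$, it suffices to show that for each such prime $p$, writing $a:=\nu_p(n)$, every factor of $\Lnr$ has degree divisible by $p^{a}$. I work throughout with the monic integer polynomial $\Lnr=\sum_{j=0}^{n}b_j x^{j}$, $b_j=\binom{n}{j}(r+1)\cdots(r+n-j)$; by Gauss's lemma any factor may be taken monic with integer coefficients, and since $b_0=n!\binom{n+r}{r}\ne 0$ it has nonzero constant term, so Lemma \ref{dumas} applies. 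The algebraic input I would exploit is the closed form $b_{n-i}=\binom{n}{i}\binom{r+i}{i}\,i!$, which by the Legendre formulas of Section \ref{proofpart1}... (the preliminaries) gives the height of the point of $NP_p(\Lnr)$ at abscissa $i$ as
\[
\nu_p(b_{n-i})=\nu_p\!\binom{n}{i}+\nu_p(i!)+\nu_p\!\binom{r+i}{i}.
\]

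Next I would translate the goal into a statement about $NP_p(\Lnr)$. By Lemma \ref{dumas}, if $\Lnr=gh$ then $\deg g$ equals the total horizontal length that $g$ contributes to the edges of $NP_p(\Lnr)$, and its contribution to any edge of slope $c/d$ (in lowest terms) is a multiple of $d$, since the endpoints of each edge of $NP_p(g)$ are lattice points. Hence $\deg g$ is divisible by $p^{a}$ as soon as every edge of $NP_p(\Lnr)$ has slope whose denominator is divisible by $p^{a}$. This condition is equivalent to the clean geometric assertion that every lattice point lying on $NP_p(\Lnr)$ has abscissa divisible by $p^{a}$, so the entire problem reduces to showing that, for every $i$ with $p^{a}\nmid i$, the point $(i,\nu_p(b_{n-i}))$ lies strictly above the polygon.

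The engine for this is carry counting in base $p$ together with the hypothesis $p\nmid\binom{n+r}{r}$. The hypothesis pins the right endpoint at $(n,\nu_p(n!))$, because $\nu_p(b_0)=\nu_p(n!)+\nu_p\binom{n+r}{r}=\nu_p(n!)$; and since $p\mid n$ one checks $\nu_p(b_{n-i})\ge 1$ for all $1\le i\le n$ (using $\nu_p\binom{n}{i}\ge 1$ for $1\le i<p$ and $\nu_p(i!)\ge 1$ for $i\ge p$), so $\Lnr\equiv x^{n}\pmod p$ and the polygon starts at $(0,0)$ and rises at once. The decisive local estimate is that, when $p^{a}\mid n$ and $p^{a}\nmid i$, the vanishing of the bottom $a$ base-$p$ digits of $n$ against a nonzero digit of $i$ in position $\nu_p(i)<a$ forces a carry in each of the positions $\nu_p(i),\nu_p(i)+1,\dots,a-1$, whence
\[
\nu_p\!\binom{n}{i}\ \ge\ a-\nu_p(i).
\]
Feeding this into the displayed formula for $\nu_p(b_{n-i})$ pushes the non-multiple points upward relative to their neighbours at the surrounding multiples of $p^{a}$.

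I expect the main obstacle to be the convex-geometry bookkeeping that converts these pointwise bounds into the two facts actually needed. Writing $w_j:=\nu_p(b_{\,n-jp^{a}})$ for the candidate heights at abscissae $jp^{a}$, I must show: first, that every point with $p^{a}\nmid i$ lies strictly above the chord joining its two nearest points at multiples of $p^{a}$, so that $NP_p(\Lnr)$ coincides with the lower hull of the sub-configuration $\{(jp^{a},w_j)\}$; and second, that this sub-hull has no edge whose slope denominator is coprime to $p$, i.e. $\nu_p(w_{j_2}-w_{j_1})\le\nu_p(j_2-j_1)$ for consecutive hull vertices $j_1p^{a}<j_2p^{a}$. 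The second requirement is the delicate one, since a single segment of slope $\nu_p(n!)/n$ would in general fail it (as for $n=6,\ p=2$); what rescues the argument is precisely that the first requirement splits the polygon at multiples of $p^{a}$, and one must then control $w_j$ modulo powers of $p$, for which the full three-term decomposition of $\nu_p(b_{n-i})$, not merely the inequality for $\nu_p\binom{n}{i}$, has to be used. Once both hold, $p^{a}\mid\deg g$ follows for each admissible $p$, and multiplying over all such primes gives $n_{0}\mid\deg g$, as desired.
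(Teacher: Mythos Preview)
Your approach via $p$-adic Newton polygons, one prime at a time, is the right one and is essentially the route taken by Hajir, to which the paper simply defers (no self-contained proof is given here; the lemma is attributed to the opening line of the proof of \cite[Lemma~4.1]{hajir2009}). However, what you have written is an outline rather than a proof: neither of the two ``requirements'' you isolate is actually established, and you explicitly flag the second as ``delicate'' without resolving it. There is also a logical slip midway: the condition ``every lattice point on $NP_p(\Lnr)$ has abscissa divisible by $p^{a}$'' does \emph{not} reduce to ``every data point $(i,\nu_p(b_{n-i}))$ with $p^{a}\nmid i$ lies strictly above the polygon'', since an edge may contain interior lattice points that are not data points. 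You later compensate by introducing the second requirement, but the transition as written is misleading.

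The efficient completion---which is Hajir's---is to compare with the truncated exponential $E_n(x)=\sum_{j=0}^{n}\frac{n!}{j!}\,x^{j}$ (the case $r=0$) rather than analyse $\Lnr$ from scratch. Your own formula shows the $\Lnr$-spot at abscissa $i$ sits at height $\nu_p\binom{n}{i}+\nu_p(i!)+\nu_p\binom{r+i}{i}$, hence on or above the $E_n$-spot, with equality at $i=0$ and $i=n$ since $\nu_p\binom{n+r}{r}=0$. The key point you are missing is this: by Kummer, $p\nmid\binom{n+r}{r}$ means the base-$p$ addition $n+r$ is carry-free, while by Coleman \cite{coleman1987} the vertices of $NP_p(E_n)$ occur at the abscissae $i^{*}=n\bmod p^{j}$ ($j\ge 0$); each such $i^{*}$ agrees with $n$ in its low base-$p$ digits and is zero above, so $r+i^{*}$ is also carry-free and $\nu_p\binom{r+i^{*}}{i^{*}}=0$. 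Thus the $\Lnr$-spots match the $E_n$-spots at every vertex of $NP_p(E_n)$, and a one-line convexity argument forces $NP_p(\Lnr)=NP_p(E_n)$. Coleman's description of $NP_p(E_n)$ then delivers both of your requirements at once---in particular all slope denominators are divisible by $p^{a}=p^{\nu_p(n)}$---and Lemma~\ref{dumas} finishes. Carrying out the slope bookkeeping for $\Lnr$ directly, as you propose, amounts to redoing Coleman's computation with the extra term $\nu_p\binom{r+i}{i}$ obstructing the view.
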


The next three results are due to Nair and Shorey \cite[Corollary 3.2, Corollary 3.3 and Lemma 2.10]{nair}.
\begin{lemma} \label{rk1}
	Assume that $\lnr$ has a factor of degree $k \geq 2.$ Then $r>1.63k$.
\end{lemma}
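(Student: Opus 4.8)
The plan is to argue by contraposition. Suppose $\Lnr$ has a factor $g$ of degree $k$ with $2 \le k \le \tfrac{n}{2}$, which by Gauss's lemma we may take monic in $\Z[x]$, and suppose for contradiction that $r \le 1.63k$. I will exhibit a prime $p$ whose $p$-adic Newton polygon satisfies the hypotheses of Lemma \ref{application of np} and thereby excludes a factor of degree exactly $k$. The primes to examine are those dividing the ``top'' block $(r+1)(r+2)\cdots(r+k)$, since for $i\le k$ the coefficient $c_{n-i}=\binom{n}{i}(r+1)\cdots(r+i)$ of $x^{n-i}$ in $\Lnr=\sum_{j=0}^{n}c_jx^j$ (with $c_n=1$) carries precisely the valuations coming from this block, together with a controllable contribution from $\binom{n}{i}$.

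First I would set up the divisibility required by Lemma \ref{application of np}. If a prime $p$ divides $r+i$ for some $1\le i\le k$, then $p\mid c_j$ for every $j$ with $0\le j\le n-k$, because $c_j=\binom{n}{j}(r+1)\cdots(r+n-j)$ contains the factor $(r+i)$ as soon as $n-j\ge i$, and $n-j\ge k\ge i$ in this range. Taking $l=k-1$, the hypotheses $p\nmid c_n$ and $p\mid c_j$ for $0\le j\le n-l-1=n-k$ of Lemma \ref{application of np} are then met, so such a prime excludes a factor of degree in $[l+1,k]=\{k\}$ provided the right-most edge of $NP_p(\Lnr)$ has slope $<\tfrac1k$.

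Next I would control that right-most edge. Its steepness near the terminal vertex $(n,\nu_p(c_0))$ is governed by the late increments, for instance $\nu_p(c_0)-\nu_p(c_1)=\nu_p(r+n)-\nu_p(n)$, and more generally by $\nu_p(r+n),\nu_p(r+n-1),\dots$; by Legendre's formulas these are small, and the slope condition $<\tfrac1k$ holds, for a prime $p$ that avoids the last few of $r+1,\dots,r+n$. So it suffices to locate a prime $p>k$ dividing one of the early terms $r+1,\dots,r+k$: being $>k$, such a $p$ divides exactly one term of the block, and its largeness keeps it isolated from the late terms. This is where the hypothesis $r\le 1.63k$ enters, for then every term of the block lies below $2.63k$, and a Sylvester--Erd\H{o}s type lower bound for the greatest prime factor of a product of $k$ consecutive integers, in explicit form, guarantees a prime factor $p>k$ in this range. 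Using $p$ in Lemma \ref{application of np} contradicts the existence of $g$, giving $r>1.63k$.

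The main obstacle is quantitative: securing the precise constant $1.63$ rather than a weaker one. This requires (i) absorbing the $\binom{n}{i}$ contributions to $\nu_p(c_{n-i})$, which spoil the clean ``only $(r+i)$ contributes'' picture in the range $k<p\le n$ and must be estimated when verifying the slope inequality; (ii) invoking a sufficiently sharp explicit greatest-prime-factor estimate for $(r+1)\cdots(r+k)$, from which $1.63$ is read off, together with a careful treatment of the sub-case $r<k$ where the terms need not all exceed $k$; and (iii) a direct check for the smallest degrees (such as $k=2,3$), where the asymptotic prime bounds are not yet effective. Lemma \ref{degree}, which forces $n_0\mid k$, can be used to prune many of the residual small-degree cases.
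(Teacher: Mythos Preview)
The paper does not prove this lemma; it is quoted verbatim as a result of Nair and Shorey \cite[Corollary~3.2]{nair}, so there is no in-paper argument to compare your proposal against.

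Your outline follows the Newton-polygon template that underlies Lemmas~\ref{step2}--\ref{step3} of the present paper and is in the right spirit, but as written it is a plan whose decisive step is asserted rather than carried out. Finding a prime $p>k$ with $p\mid (r+i)$ for some $1\le i\le k$ (which Sylvester--Schur indeed supplies once $r\ge k$) does \emph{not} by itself yield the slope bound $M_p<\tfrac1k$ required by Lemma~\ref{application of np}. By Remark~\ref{remark},
\[
\mu_j\le \frac{j-\sigma_p(j)}{(p-1)j}+\frac{1}{j}\,\nu_p\!\binom{n+r}{r},
\]
so what is actually needed is control of $\nu_p\binom{n+r}{r}$ (compare Lemma~\ref{step3}(a), where the clean case is $\nu_p\binom{n+r}{r}=0$). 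Your phrases ``avoids the last few of $r+1,\dots,r+n$'' and ``its largeness keeps it isolated from the late terms'' do not secure this: $p$ divides every $p$-th term of $(r+1)\cdots(r+n)$, and whether $\nu_p\binom{n+r}{r}$ vanishes is an arithmetic condition on $n$ that $p>k$ alone does not force. Since the statement must hold for \emph{all} $n$, a single choice of $p$ depending only on $r,k$ cannot work; the Nair--Shorey argument has to supply, for each $n$, a prime meeting both the divisibility and the slope constraints simultaneously.

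Your closing list of obstacles (i)--(iii) identifies exactly these gaps, but none is dispatched; in particular the constant $1.63$ is not derived. A generic Sylvester--Erd\H{o}s input only tells you some $p>k$ exists, not that one with the required $\nu_p\binom{n+r}{r}$ behaviour does, and it is the latter quantitative input in \cite{nair} from which $1.63$ is read off.
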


\begin{lemma} \label{rk2}
	Assume that $\lnr$ has a factor of degree $k \geq 2.$ Then
	$$r>\min \{104,3.42k+1\}.$$
\end{lemma}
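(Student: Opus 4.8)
The plan is to prove the contrapositive statement: granting a factor of degree $k\ge 2$, I will exhibit a prime whose $p$-adic Newton polygon, through Filaseta's criterion (Lemma \ref{application of np}), rules out such a factor once $r$ drops below the asserted threshold. First I would set up the coefficient arithmetic. Writing the coefficient of $x^{n-i}$ in $\Lnr$ as $c_{n-i}=\binom{n}{i}(r+1)(r+2)\cdots(r+i)$ and using $(r+1)\cdots(r+i)=i!\binom{r+i}{i}$, I note that the constant term is $c_0=n!\binom{n+r}{r}$ and, crucially, that the prime factors exceeding $k$ of the product $(r+1)\cdots(r+k)$ are exactly those of $\binom{r+k}{k}$, since $k!$ contributes only primes $\le k$. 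Because Lemma \ref{rk1} already gives $r>1.63k$, hence $r>k$, the integers $r+1,\dots,r+k$ all exceed $k$, which is the hypothesis under which theorems on the greatest prime factor of a product of consecutive integers apply.

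The Newton-polygon step is as follows. Suppose a prime $p>k$ divides $\binom{r+k}{k}$, and let $i_0\in\{1,\dots,k\}$ be least with $p\mid r+i_0$. Then $p$ divides $c_{n-i}$ for every $i\ge i_0$, so taking $l=i_0-1$ in Lemma \ref{application of np} I would conclude that $\Lnr$ has no factor of degree in $[i_0,k]$ — contradicting the assumed factor of degree $k$ — provided the right-most edge of $NP_p(\Lnr)$ has slope $<1/k$. Estimating that slope is where the structure of the coefficients is used: $\nu_p\big((r+1)\cdots(r+i)\big)$ is a slowly increasing, almost-linear step function of $i$ whose large-$i$ density is about $1/p$, so for $p$ sufficiently large relative to $k$ the final hull edge stays below slope $1/k$. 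Consequently the existence of the degree-$k$ factor forces $\binom{r+k}{k}$ to have no prime factor above a certain explicit multiple of $k$.

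The arithmetic step then closes the argument. By an explicit version of the Sylvester--Erd\H{o}s theorem on the greatest prime factor of $\binom{r+k}{k}$ (in the sharp quantitative form developed by Laishram and Shorey), such a large prime factor is forced to exist unless $r$ exceeds an explicit constant times $k$; matching the threshold coming from the slope estimate is what should produce the bound $r>3.42k+1$. The finitely many pairs $(k,r)$ with $k$ small, where the asymptotic prime-factor estimate is not yet in force, would be cleared by direct computation, and this is presumably the role of the uniform fallback $r>104$ in the statement.

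The main obstacle I anticipate is twofold. First, the terms $\nu_p\binom{n}{i}$ enter the Newton polygon and depend on $n$, whereas the bound must be uniform in $n$; I must therefore show that these contributions, together with any high power of $p$ dividing some $r+i$ near $i=n$ (and the interaction with primes dividing $\binom{n+r}{r}$, where Lemma \ref{degree} and the identity $c_0=n!\binom{n+r}{r}$ are relevant), cannot tilt the right-most edge to slope $\ge 1/k$. Second, isolating the sharp constant $3.42$ requires the prime-factor estimate to be genuinely quantitative and the slope threshold to be matched to it exactly, so the real work lies in the interplay of these two sharp inputs rather than in either one alone.
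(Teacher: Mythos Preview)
The paper does not prove this lemma; it is stated as one of ``the next three results due to Nair and Shorey \cite[Corollary 3.2, Corollary 3.3 and Lemma 2.10]{nair}'' and is simply quoted without proof. There is therefore no proof in the present paper to compare your attempt against.

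That said, your outline is broadly in the right spirit: the Nair--Shorey argument does combine a Newton-polygon/Filaseta-type criterion ruling out factors of degree in an interval with explicit lower bounds on the greatest prime factor of $(r+1)\cdots(r+k)$ coming from the Sylvester--Erd\H{o}s circle of results (in the sharp forms due to Laishram--Shorey and others). However, what you have written is explicitly a plan rather than a proof: you identify but do not resolve the two main technical issues (uniformity in $n$ of the slope bound, and matching the quantitative prime-factor input to the constant $3.42$), and you defer the small-$k$ cases to unspecified computation. As it stands this is a reasonable roadmap, but it is not a proof, and the sharp constants $3.42$ and $104$ will not fall out without actually carrying through the estimates in \cite{nair}.
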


\begin{lemma} \label{small values}
	For $n\leq127$ and $r\leq103$, $\lnr$ is irreducible.
\end{lemma}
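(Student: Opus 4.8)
The plan is to suppose $\Lnr$ is reducible for some $n\leq 127$, $r\leq 103$ and derive a contradiction by first bounding the degree of a smallest factor and then eliminating the finitely many surviving configurations. As noted in the excerpt, a reducible $\Lnr$ has a factor of degree $k$ with $1\leq k\leq \frac{n}{2}$. First I would dispose of all large $k\geq 2$ via Lemma \ref{rk2}: such a factor forces $r>\min\{104,\,3.42k+1\}$, and since $r\leq 103<104$ this gives $3.42k+1<104$, hence $k\leq 30$. The borderline value $k=30$ would require $r>3.42\cdot 30+1=103.6$, i.e.\ $r\geq 104$, which is excluded; so in fact $2\leq k\leq 29$ and moreover $r>3.42k+1$. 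Thus it suffices to rule out a factor of each degree $k\in\{1,2,\dots,29\}$, and for each $k\geq 2$ only the range $3.42k+1<r\leq 103$ (together with $n\leq 127$) needs examination.

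Next I would exploit Lemma \ref{degree}: every factor has degree divisible by $n_{0}$, so a factor of degree $k$ can occur only when $n_{0}\mid k$. Writing $n=n_{0}n_{1}$, if $\gcd\!\left(n,\binom{n+r}{r}\right)=1$ then $n_{1}=1$ and $n_{0}=n$, so every factor has degree divisible by $n$, forcing $\Lnr$ to be irreducible; this removes the overwhelming majority of pairs $(n,r)$. In particular a linear factor ($k=1$) can appear only when $n_{0}\mid 1$, i.e.\ $n_{0}=1$. For this case I would observe that $\Lnr=\sum_{j=0}^{n}\binom{n}{j}(r+1)\cdots(r+n-j)\,x^{j}$ is monic with strictly positive integer coefficients, so it has no nonnegative real root, and any rational root is a negative integer dividing the constant term $(r+1)\cdots(r+n)$; eliminating these is a short finite check over the few $(n,r)$ with $n_{0}=1$.

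Finally, for each remaining triple $(k,r,n)$ with $2\leq k\leq 29$, $n_{0}\mid k$, $3.42k+1<r\leq 103$ and $n\leq 127$, I would rule out a factor of degree $k$ using $p$-adic Newton polygons together with the constraints above. The aim is, for a suitably chosen prime $p$, to verify the hypotheses of Lemma \ref{application of np}: that $p\nmid c_{n}$, that $p$ divides the low-order coefficients, and that the right-most edge of $NP_{p}(\Lnr)$ has slope $<\frac{1}{k}$, which forbids a factor whose degree lies in a range containing $k$. Since the coefficient of $x^{j}$ equals $\binom{n}{j}\frac{(r+n-j)!}{r!}$, its valuation is computable through Legendre's formula, so the relevant slopes can be determined explicitly; the triples not settled this way form a small finite list that can be checked directly for a factor of each admissible degree. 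The main obstacle is precisely this last step: for the mid-range degrees $2\leq k\leq 29$ no single prime works uniformly, so one must locate an appropriate prime for each family of $(n,r)$ (or fall back on explicit factorization), and organizing this finite but delicate case analysis so that the residual computation stays small is where the real work lies.
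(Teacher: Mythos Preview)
The paper does not prove this lemma; it is quoted from Nair and Shorey \cite{nair} as their Lemma~2.10, and for a finite range of this size the expected proof is a direct computer verification of irreducibility for each pair $(n,r)$. There is no argument in the present paper to compare yours against.

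Your plan, while more structured, has a circularity risk you must address. You invoke Lemma~\ref{rk2} to bound the factor-degree by $29$; but Lemma~\ref{rk2} is Corollary~3.3 of \cite{nair}, appearing there \emph{after} their Lemma~2.10, and its proof may well rely on Lemma~2.10 to dispose of small $n$ (this is a standard pattern in such arguments). Unless you verify that Corollary~3.3 is established independently of Lemma~2.10, your reduction is circular. A secondary point: calling the linear-factor case ``a short finite check over the few $(n,r)$ with $n_0=1$'' understates the work. The condition $n_0=1$ is not especially rare once $r$ is moderate --- by Lemma~\ref{ppower} it forces each prime power dividing $n$ to be $\leq r$, and many $n\le 127$ satisfy that --- and for each such pair the constant term $(r+1)\cdots(r+n)$ can have an enormous number of divisors, so a naive rational-root search is not short. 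In practice one treats $k=1$ with the same Newton-polygon machinery you sketch for $k\ge 2$, or one simply tests irreducibility of each $\Lnr$ in a CAS, which subsumes every case at once and is precisely what the cited result amounts to.
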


We also need the following statement used in \cite{nair} and we give a proof here.
\begin{lemma} \label{ppower}
	For $p|n_{1}$, we have $p^{\nu_{p}(n)} \leq r.$	
\end{lemma}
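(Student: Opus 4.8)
The plan is to convert the divisibility hypothesis into a statement about carries in base-$p$ addition, using the Legendre formula already recorded in the preliminaries. By the definition of $n_1$, the condition $p\mid n_1$ means precisely that $p\mid n$ and $p\mid\binom{n+r}{r}$. First I would apply the formula for $\nu_p\left(\binom{m}{t}\right)$ with $m=n+r$ and $t=r$, so that $m-t=n$, to obtain
\[
\nu_p\left(\binom{n+r}{r}\right)=\frac{\s_p(r)+\s_p(n)-\s_p(n+r)}{p-1}.
\]
Since this valuation is positive by assumption, we get $\s_p(n+r)<\s_p(n)+\s_p(r)$, which is exactly the statement that adding $n$ and $r$ in base $p$ produces at least one carry (equivalently, the digit sums fail to be additive).

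Next I would argue by contradiction, supposing $p^{\nu_p(n)}>r$ and setting $a=\nu_p(n)$, so that $p^a\mid n$ while $p^a>r$. The first condition forces the base-$p$ digits of $n$ in positions $0,1,\dots,a-1$ to vanish, and the second forces $r<p^a$, so all base-$p$ digits of $r$ lie in positions $0,1,\dots,a-1$. Now I examine the addition of $n$ and $r$ digit by digit: in each position below $a$ the digit of $n$ is $0$, so the sum digit equals the digit of $r$, which is already less than $p$, and no carry is produced; in each position $\geq a$ the digit of $r$ is $0$ and, inductively, no carry is received, so again none is produced. Hence the addition is carry-free, giving $\s_p(n+r)=\s_p(n)+\s_p(r)$ and therefore $\nu_p\left(\binom{n+r}{r}\right)=0$. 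This contradicts $p\mid\binom{n+r}{r}$, and so $p^{\nu_p(n)}\leq r$.

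The argument is short, and there is no serious obstacle; the one point requiring care is the carry bookkeeping in the inductive sweep through the digit positions, namely verifying that no carry ever propagates from the low positions (where $n$ is zero) up into the positions $\geq a$ (where $n$ carries its nonzero digits). Since $r<p^a$, there is simply nothing available to carry into those higher positions, so the check is immediate once the base-$p$ expansions of $n$ and $r$ are laid out.
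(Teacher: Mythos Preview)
Your proof is correct and follows essentially the same approach as the paper's: both argue by contradiction, assuming $p^{\nu_p(n)}>r$, and show that the base-$p$ expansions of $n$ and $r$ occupy disjoint digit positions, so $\sigma_p(n+r)=\sigma_p(n)+\sigma_p(r)$ and hence $\nu_p\binom{n+r}{r}=0$. The only cosmetic difference is that you phrase this via carries while the paper writes out the digit sums explicitly.
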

\begin{proof}
	Write $n=p^{e}d,$ where $d$ is coprime to $p$ such that $p^{e}>r.$ We will show that $\nu _{p} \left( \binom{n+r}{r} \right) = 0.$
	
	Let $r=r_{e-1}p^{e-1}+\cdots + r_{1}p + r_{0}$ be the $p$-adic representation of $r$. Then $n+r = d p^{e} + r_{e-1} p^{e-1} + \cdots + r_{1} p + r_{0}.$ So we have $\s_p(n) = \s_p(d),~ \s_p(r) = r_{e-1} + \cdots + r_{1} + r_{0}$ and $\s_p(n+r) = \s_p(d) + r_{e-1} + \cdots + r_{1} + r_{0}.$ Thus $\nu_{p} \left( \binom{n+r}{r} \right) =  \frac {\s_p(n) + \s_p(r) -\s_p(n+r) }{p-1} = 0.$
\end{proof}

The following result is due to Harborth and Kemnitz \cite{harborthkemnitz}.
\begin{lemma} \label{harborthkemnitz}
	There exists a prime $p$ satisfying :
	\begin{itemize}
		\item [$(a)$] $x<p<\frac{6}{5}x$ for real $x\geq 25$,
		\item [$(b)$] $x<p\leq \frac{11}{10}x$ for real $x\geq 116$.
	\end{itemize}
\end{lemma}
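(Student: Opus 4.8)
The statement asserts the existence of a prime in each of the intervals $(x,\tfrac65 x)$ and $(x,\tfrac{11}{10}x]$ once $x$ is large enough; equivalently, writing $\pi$ for the prime counting function, it asserts $\pi(\tfrac65 x)-\pi(x)\ge 1$ for $x\ge 25$ and $\pi(\tfrac{11}{10}x)-\pi(x)\ge 1$ for $x\ge 116$. These are sharpened, explicit forms of Bertrand's postulate, and the plan is to prove each part by splitting the range of $x$ into a large-$x$ \emph{analytic} regime, handled by explicit estimates for $\pi$, and a small-$x$ \emph{finite} regime, handled by direct verification.

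For the analytic regime I would invoke explicit two-sided bounds for $\pi(x)$ of Rosser--Schoenfeld / Dusart type, namely inequalities of the shape
\[
\frac{x}{\log x}\left(1+\frac{1}{\log x}\right)\le \pi(x)\le \frac{x}{\log x}\left(1+\frac{1.2762}{\log x}\right),
\]
valid for all $x$ past a modest explicit point (the constant $1.2762$ being the same one that appears in Theorem \ref{mainthm2}). Applying the lower bound at the right endpoint and the upper bound at $x$ gives
\[
\pi\!\left(\tfrac65 x\right)-\pi(x)\ge \frac{\tfrac65 x}{\log(\tfrac65 x)}\left(1+\frac{1}{\log(\tfrac65 x)}\right)-\frac{x}{\log x}\left(1+\frac{1.2762}{\log x}\right).
\]
The leading behaviour of the right-hand side is $\tfrac65\cdot\frac{x}{\log x}-\frac{x}{\log x}=\frac{x}{5\log x}$, which tends to infinity, while the secondary terms are $O\!\left(\frac{x}{(\log x)^2}\right)$; hence the difference exceeds $1$ for all $x\ge X_0$ with an explicit $X_0$. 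The same computation with $\tfrac{11}{10}$ in place of $\tfrac65$ yields a leading gap $\frac{x}{10\log x}$, only half as large, so a larger threshold $X_0'$ is needed --- this is precisely why part $(b)$ starts at $116$ rather than $25$.

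For the finite regime $25\le x< X_0$ (respectively $116\le x< X_0'$) I would use a prime-chain argument. It suffices to exhibit a finite increasing list of primes $q_0<q_1<\dots<q_m$ with $q_0\le 25$, $q_m\ge X_0$, and consecutive ratios $q_{i+1}<\tfrac65 q_i$ for part $(a)$ (and $q_{i+1}\le\tfrac{11}{10}q_i$ for part $(b)$). Given any real $x$ in the range, let $q_{i+1}$ be the least prime in the list exceeding $x$, so that $q_i\le x<q_{i+1}$; then $q_{i+1}<\tfrac65 q_i\le\tfrac65 x$ in case $(a)$, and $q_{i+1}\le\tfrac{11}{10}q_i\le\tfrac{11}{10}x$ in case $(b)$, placing $q_{i+1}$ in the required interval. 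This reduces the finite regime to checking a short, explicitly listed chain of primes, which is routine.

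The main obstacle is quantitative bookkeeping rather than any conceptual difficulty: one must choose the explicit $\pi$-bounds sharp enough that the crossover point $X_0$ (and $X_0'$) is small, since a large $X_0$ forces a long and delicate prime chain, and one must track strictness carefully --- the strict inequality $p<\tfrac65 x$ in $(a)$ demands a strict ratio condition $q_{i+1}<\tfrac65 q_i$ in the chain, whereas the weak inequality $p\le\tfrac{11}{10}x$ in $(b)$ tolerates equality and the boundary case in which $\tfrac{11}{10}x$ is itself an integer prime. Pinning down the two thresholds and verifying the two prime chains are the concrete tasks; the original argument of Harborth and Kemnitz is essentially this combination of explicit prime-counting estimates with a finite computational check.
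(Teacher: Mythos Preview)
The paper does not prove this lemma at all: it is stated as ``The following result is due to Harborth and Kemnitz \cite{harborthkemnitz}'' and then quoted without proof, so there is no argument in the paper to compare your proposal against. Your sketch (explicit Rosser--Schoenfeld/Dusart type bounds for $\pi(x)$ to handle large $x$, combined with a finite prime-chain verification for the remaining range) is exactly the strategy Harborth and Kemnitz use in the cited reference, and is correct in outline; but for the purposes of this paper a citation is all that is required.
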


For real numbers $x>1$, we denote
	\begin{align*}
	\pi(x)&=\sum\limits_{p\leq x} 1.
	\end{align*}

We need the following result due to Dusart \cite{dusart} for the proof of Theorem \ref{mainthm2}.
\begin{lemma} \label{pi}
	We have
	\begin{align*}
	\pi(x)&\leq \frac{x}{\log x} \left(1+\frac{1.2762}{\log x}\right) \ \ {\it for ~real} \ x>1.
	\end{align*}
\end{lemma}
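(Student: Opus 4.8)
The statement is an explicit, unconditional form of the Prime Number Theorem, so the natural route is to pass through the Chebyshev functions $\theta(x) = \sum_{p \leq x} \log p$ and $\psi(x) = \sum_{p^k \leq x} \log p$, for which sharp explicit estimates are available, and then transfer the bound to $\pi(x)$. The plan is to split the range of $x$ into a large-$x$ regime, handled analytically, and a bounded initial segment, handled by direct computation with tables of primes. For the analytic part, I would first secure an explicit estimate of the shape $|\psi(x) - x| \leq x\, \varepsilon(x)$ with $\varepsilon(x) \to 0$ effectively, then descend $\psi \to \theta \to \pi$ by elementary comparisons and partial summation.

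The heart of the argument is the bound on $\psi$. I would invoke the explicit (Riemann--von Mangoldt) formula
\begin{align*}
\psi(x) = x - \sum_{\rho} \frac{x^{\rho}}{\rho} - \log(2\pi) - \tfrac{1}{2}\log\!\left(1 - x^{-2}\right),
\end{align*}
where $\rho$ runs over the nontrivial zeros of $\zeta$, and control the sum over $\rho$ using two inputs: a classical de la Vall\'ee Poussin zero-free region, $\zeta(\sigma + it) \neq 0$ whenever $\sigma \geq 1 - \frac{1}{R \log |t|}$ for an explicit $R$ and $|t|$ large, together with a numerical verification that every zero up to some height $T_0$ lies on the critical line. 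Optimizing the truncation height against these two inputs yields an explicit $\varepsilon(x)$, and is exactly the mechanism by which Rosser--Schoenfeld and later Dusart obtain numerically sharp error terms. Passing from $\psi$ to $\theta$ costs only $\psi(x) - \theta(x) = \theta(x^{1/2}) + \theta(x^{1/3}) + \cdots$, which is of order $\sqrt{x}$ and is absorbed into the error.

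To reach $\pi$, I would use Abel summation with weights $\log p$, giving
\begin{align*}
\pi(x) = \frac{\theta(x)}{\log x} + \int_{2}^{x} \frac{\theta(t)}{t \log^{2} t}\, dt.
\end{align*}
Substituting the upper bound for $\theta(t)$ and carrying out the integral produces a main term $\frac{x}{\log x}$ together with a secondary term of order $\frac{x}{\log^{2} x}$; the precise constant $1.2762$ emerges from tracking all the constants through this integral and choosing the free parameters in $\varepsilon$ optimally. Finally, on the bounded range $1 < x \leq x_0$ where the analytic estimate is not yet strong enough, the inequality is checked directly by comparing $\pi(x)$ against the elementary function $\frac{x}{\log x}\left(1 + \frac{1.2762}{\log x}\right)$ at the finitely many jump points of $\pi$.

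The main obstacle is not the structure of the argument, which is the standard $\zeta$-zeros $\to \psi \to \theta \to \pi$ pipeline, but the quantitative sharpness needed to land the specific constant $1.2762$ rather than merely the correct order of magnitude. This requires both a strong explicit zero-free region and an extensive numerical census of the zeros of $\zeta$ up to a large height $T_0$, and then a careful, essentially loss-free bookkeeping of constants through the explicit formula and the partial-summation integral. A crude version of each step gives the right shape with a worse constant; extracting $1.2762$ is precisely the delicate optimization that constitutes Dusart's contribution.
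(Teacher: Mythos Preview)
Your sketch is a reasonable high-level outline of how explicit $\pi(x)$ bounds of this type are actually obtained in the analytic number theory literature, and in broad strokes it matches Dusart's method. However, the paper does not prove this lemma at all: it is stated with attribution to Dusart \cite{dusart} and simply quoted as a known result, with no argument given. So there is nothing in the paper's ``proof'' to compare against beyond the citation.

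In context, then, you are doing far more than the paper does. If your goal is to replicate the paper, the correct move is to cite Dusart and move on. If your goal is to supply an independent proof, be aware that your outline, while structurally sound, is only a plan: turning it into the specific constant $1.2762$ requires the full machinery of Dusart's paper (explicit zero-free region constants, a large verified height $T_0$ for the zeros, and careful numerical optimization through the partial-summation integral), none of which can be reproduced in a paragraph. As written, your proposal is a correct roadmap but not a proof.
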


\section{Lemmas for the proof of Theorem \ref{mainthm1}}
For the proof of Theorem \ref{mainthm1}, we use a number of results which we record here as lemmas and corollaries. These results are general in nature and valid for any positive integers $n$ and $r$.

\begin{lemma}\label{3.1}
	Let $p|n_{1}$ and $r < p^2$. Then $\left\lfloor\frac{r}{p}\right\rfloor \geq 1$ and
	\begin{align*}
	\frac{n}{p} &\equiv -j \pmod p \ {\it for \ some} \ j \ {\it with} \ 1 \leq j \leq \left\lfloor\frac{r}{p}\right\rfloor.
	\end{align*}
\end{lemma}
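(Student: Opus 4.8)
The plan is to translate the divisibility hypothesis $p\mid n_{1}$ into an arithmetic condition on the base-$p$ digits of $n$ and $r$, and then simply read off the congruence. By the definition of $n_{1}$, the assumption $p\mid n_{1}$ means precisely that $p\mid n$ and $p\mid\binom{n+r}{r}$, so $\nu_p\!\left(\binom{n+r}{r}\right)\geq 1$. First I would invoke Lemma \ref{ppower}, which gives $p^{\nu_p(n)}\leq r$; combined with the hypothesis $r<p^{2}$ this forces $\nu_p(n)=1$, i.e.\ $p\parallel n$. Consequently, writing the base-$p$ expansion $n=\sum_{i\geq 0}a_ip^{i}$, we have $a_0=0$ and $a_1\neq 0$, so that $\frac{n}{p}\equiv a_1\pmod p$. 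Since $r<p^{2}$, its expansion is $r=r_0+r_1p$ with $0\leq r_0,r_1<p$ and $r_1=\left\lfloor r/p\right\rfloor$.

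The heart of the argument is to locate the single place at which a carry can occur when adding $n$ and $r$ in base $p$, using the Legendre formula $\nu_p\!\left(\binom{n+r}{r}\right)=\frac{\s_p(r)+\s_p(n)-\s_p(n+r)}{p-1}$. The point is that $r$ has at most two nonzero base-$p$ digits. At the units place we have $a_0+r_0=r_0<p$, so no carry is produced there; and for $i\geq 2$ the digit $r_i$ vanishes. Hence if $a_1+r_1<p$ were to hold, then the addition of $n$ and $r$ would produce no carry at all, giving $\s_p(n+r)=\s_p(n)+\s_p(r)$ and therefore $\nu_p\!\left(\binom{n+r}{r}\right)=0$, contradicting $p\mid\binom{n+r}{r}$. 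Thus I conclude $a_1+r_1\geq p$.

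From $a_1+r_1\geq p$ and $a_1\leq p-1$ I immediately obtain $r_1\geq 1$, that is $\left\lfloor r/p\right\rfloor\geq 1$, which is the first assertion. For the congruence, set $j:=p-a_1$. Then $a_1\geq p-r_1$ gives $1\leq j\leq r_1=\left\lfloor r/p\right\rfloor$, while $\frac{n}{p}\equiv a_1=p-j\equiv -j\pmod p$, exactly as required.

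I expect the only delicate point to be the carry bookkeeping in the middle step: one must verify that the hypothesis $r<p^{2}$ (controlling the digits of $r$) together with $\nu_p(n)=1$ (controlling the units digit of $n$) genuinely confines every possible carry to the $p^{1}$-place, so that positivity of the valuation is equivalent to the single inequality $a_1+r_1\geq p$. Everything else reduces to a routine reading of base-$p$ digits.
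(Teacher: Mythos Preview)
Your proof is correct and follows essentially the same path as the paper's: both invoke Lemma~\ref{ppower} to force $\nu_p(n)=1$, write out the base-$p$ expansions of $n$ and $r$, and use the Legendre digit-sum formula to extract the congruence on $n/p$. The only cosmetic difference is that the paper reduces $\nu_p\binom{n+r}{r}$ explicitly to $\nu_p\binom{d+r_1}{r_1}=\nu_p\big((d+1)\cdots(d+r_1)\big)$ with $d=n/p$, and then reads off the unique term divisible by $p$, whereas you phrase the same computation as a Kummer-type carry argument localised at the $p^1$-place; the content is identical.
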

\begin{proof}
	Since $p|n_{1}$ and $r < p^2$, it follows from Lemma \ref{ppower} that $\left\lfloor\frac{r}{p}\right\rfloor \geq 1$ and $\nu_{p}(n_{1}) = 1$. We can write $n=pd,$ where $d$ is coprime to $p$ and $r = r_1 p + r_0,$ where $1 \leq r_1<p,~ 0 \leq r_0 < p.$ Then $n+r = p(d+r_1) + r_0.$ So we have $\s_p(n) = \s_p(d),~ \s_p(r) = r_1 + r_{0}$ and $\s_p(n+r) = \s_p(d+r_1) + r_{0}.$ Therefore
	\begin{align*}
		1\leq \nu_{p} \left( \binom{n+r}{r} \right) &= \frac {\s_p(n) + \s_p(r) -\s_p(n+r) }{p-1}\\
		&= \frac {\s_p(d) + r_1 -\s_p(d+r_1) }{p-1}\\
		&= \nu_{p} \left( \binom{d+r_1}{r_1} \right)\\
		&= \nu_{p} \left( \frac{(d+1)(d+2)\cdots(d+r_1)}{r_1!} \right)\\
		&= \nu_{p} ((d+1)(d+2)\cdots(d+r_1))~ (\textrm{since } r_1<p) \\
		&= \nu_{p} (d+j)~ \textrm{for exactly one } j \textrm{ with } 1 \leq j \leq r_1.
	\end{align*}
	Since $r_1=\left\lfloor\frac{r}{p}\right\rfloor<p$, we have $\frac{n}{p} \equiv -j \pmod p,$ for some $1 \leq j \leq \left\lfloor\frac{r}{p}\right\rfloor.$
\end{proof}

\begin{corollary} \label{cong}
	If $p|n_{1}$ and $r<p^2,$ then $d+\left\lfloor\frac{r}{p}\right\rfloor\geq p$ where $d\equiv \frac{n}{p}  \pmod p$ with $1\leq d<p$.
\end{corollary}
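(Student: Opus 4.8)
The plan is to obtain the conclusion as a short arithmetic consequence of Lemma \ref{3.1}. Since $p \mid n_{1}$ and $r < p^{2}$, Lemma \ref{3.1} already supplies an integer $j$ with $1 \leq j \leq \left\lfloor\frac{r}{p}\right\rfloor$ such that $\frac{n}{p} \equiv -j \pmod{p}$. Because $d$ is defined by $d \equiv \frac{n}{p} \pmod{p}$ with $1 \leq d < p$, this congruence transfers verbatim to $d$, giving $d \equiv -j \pmod{p}$, or equivalently $d + j \equiv 0 \pmod{p}$. So the first step is simply to invoke Lemma \ref{3.1} and rewrite the congruence in terms of $d$.

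The next step is to pin down the exact value of $d + j$ by a range argument. From $r < p^{2}$ one gets $\left\lfloor\frac{r}{p}\right\rfloor \leq p-1$, so $1 \leq j \leq p-1$; combined with $1 \leq d \leq p-1$ this forces $2 \leq d + j \leq 2p - 2$. The only multiple of $p$ lying in the interval $[2,\, 2p-2]$ is $p$ itself, so the divisibility $d + j \equiv 0 \pmod{p}$ compels $d + j = p$. I would note here that $d$ cannot vanish: Lemma \ref{3.1} also establishes $\nu_{p}(n) = \nu_{p}(n_{1}) = 1$, so $\frac{n}{p}$ is coprime to $p$ and its residue $d$ genuinely lies in $\{1, \dots, p-1\}$, which keeps the lower bound $d + j \geq 2$ valid.

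Finally, rearranging $d + j = p$ gives $d = p - j$, and since $j \leq \left\lfloor\frac{r}{p}\right\rfloor$ this yields $d \geq p - \left\lfloor\frac{r}{p}\right\rfloor$, that is, $d + \left\lfloor\frac{r}{p}\right\rfloor \geq p$, as claimed. I do not expect any real obstacle in this proof: the whole content is the observation that the modular constraint from Lemma \ref{3.1}, together with the a priori bounds $1 \leq d < p$ and $1 \leq j < p$, leaves exactly one admissible value, namely $d + j = p$. The only points needing a moment's care are checking $\left\lfloor\frac{r}{p}\right\rfloor < p$ (so that $j < p$) and confirming $d \neq 0$, both of which follow at once from $r < p^{2}$ and $\nu_{p}(n) = 1$, respectively.
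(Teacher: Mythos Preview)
Your proof is correct and is exactly the straightforward derivation from Lemma~\ref{3.1} that the paper intends; indeed, the paper states this corollary without proof, treating it as an immediate consequence of Lemma~\ref{3.1}, and your argument spells out precisely that deduction.
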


For the remaining part of this paper, we need the following notation and remark.

\begin{remark} \label{remark}
	\par For $0 \leq j \leq n$, we define $b_j := \binom{n}{j} (r+1) \cdots (r+j)$. The Newton polygon for $\Lnr=\sum\limits_{j=0}^{n} b_{n-j}x^j$ with respect to $p$ is given by the lower edges along the convex hull of the points $(j,\nu_p(b_j))$ for $0 \leq j \leq n.$ Thus the slope of the right-most edge of $NP_p(\Lnr)$ is $M_p=\max\limits_{1 \leq j \leq n} \{ \mu_j \} $ where
	\begin{align*}
	\mu_j &:=\frac{\nu_p(b_n)-\nu_p(b_{n-j})}{j} \\
	&=  \frac{\nu_p((n+r)!) - \nu_p((n+r-j)!) - \nu_p(\binom{n}{j})}{j}\\
	&= \frac{j-\s_p(n+r)+\s_p(n+r-j)}{(p-1)j} - \frac{\s_p(j)+\s_p(n-j)-\s_p(n)}{(p-1)j}\\
	&= \frac{j- \s_p(j)}{(p-1)j}+\frac{\s_p(n)+\s_p(r)-\s_p(n+r)}{(p-1)j} -\frac{\s_p(n-j)+\s_p(r)-\s_p(n+r-j)}{(p-1)j}\\
	&= \frac{j-\s_p(j)}{(p-1)j} + \frac{1}{j} \nu_p \left( \binom{n+r}{r} \right)-\frac{1}{j} \nu_p \left( \binom{n+r-j}{r} \right)\\
	&\leq \frac{j-\s_p(j)}{(p-1)j} + \frac{1}{j} \nu_p \left( \binom{n+r}{r} \right) ({\rm since~ } \nu_p \left( \binom{n+r-j}{r} \right)\geq0 ).
	\end{align*}
\end{remark}

\begin{lemma} \label{step1}
	Let $p = p_{\pi(n)} = n - k_n$ be the largest prime less than or equal to $n$ with $r + k_n < p$. Then $\Lnr$ cannot have a factor with degree  $>k_n$.
\end{lemma}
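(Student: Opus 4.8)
The plan is to apply Filaseta's factor-exclusion criterion (Lemma \ref{application of np}) to $g(x)=\Lnr$ at the prime $p=n-k_n$, taking $l=k_n$ and $k=p-1$. First note that the hypothesis $r+k_n<p$ is equivalent to $n+r<2p$, so in particular $p>\frac{n+r}{2}>\frac n2$; consequently $\lfloor n/2\rfloor\leq p-1$, which means the interval $[l+1,k]=[k_n+1,p-1]$ produced by the lemma already contains every admissible factor degree exceeding $k_n$ (recall the convention that a factor has degree at most $n/2$). Thus it suffices to verify the three hypotheses of Lemma \ref{application of np} with these choices, namely that the leading coefficient is prime to $p$, that the low-order coefficients are divisible by $p$, and that the right-most slope of $NP_p(\Lnr)$ is $<\frac1k$.

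Writing $b_i=\binom{n}{i}\frac{(r+i)!}{r!}$ as in Remark \ref{remark}, so that $\Lnr=\sum_{j=0}^n b_{n-j}x^j$, the heart of the argument is to pin down $\nu_p(b_i)$ via the Legendre formulas. Since $r\leq r+k_n<p$ we have $\nu_p(r!)=0$, and since $n=p+k_n$ with $0\leq k_n<p$ the integer $n$ has the two $p$-adic digits $(1,k_n)$. For $0\leq i\leq k_n$ one checks that adding $i$ and $n-i$ in base $p$ produces no carry, so $\nu_p\binom ni=0$, while $r+i\leq r+k_n<p$ forces $\nu_p((r+i)!)=0$; hence $\nu_p(b_i)=0$ on this range. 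For $i\geq k_n+1$ I claim $\nu_p(b_i)\geq1$: when $k_n+1\leq i\leq p-1$ the units digit of $i$ exceeds $k_n$, so the base-$p$ addition of $i$ and $n-i$ carries and $\nu_p\binom ni\geq1$; when $i\geq p$ we instead have $r+i\geq p$, whence $\nu_p((r+i)!)\geq1$. In either case $\nu_p(b_i)\geq1$.

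These valuations supply exactly what the criterion needs. The leading coefficient is $b_0=1$, prime to $p$; the coefficient $b_{n-j}$ of $x^j$ is divisible by $p$ for all $j$ with $n-j\geq k_n+1$, i.e.\ for all $j\leq p-1$, which covers the required range $\{0,\dots,n-l-1\}$ once $l=k_n$. Finally, using $\mu_j=\frac{\nu_p(b_n)-\nu_p(b_{n-j})}{j}$ from Remark \ref{remark} with $\nu_p(b_n)=1$: for $j\leq p-1$ we have $\nu_p(b_{n-j})\geq1$ so $\mu_j\leq0$, while for $j\geq p$ we have $n-j\leq k_n$, hence $\nu_p(b_{n-j})=0$ and $\mu_j=\frac1j\leq\frac1p$. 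Thus the right-most slope is $M_p=\frac1p<\frac1{p-1}=\frac1k$, and Lemma \ref{application of np} excludes factors of degree in $[k_n+1,p-1]$, which by the reduction above completes the proof. The main obstacle is precisely the exact valuation computation for $\nu_p(b_i)$—the base-$p$ carry analysis of $\binom ni$ and its interplay with $\nu_p((r+i)!)$—since it is here that $r+k_n<p$ does all the work, killing $\nu_p(r!)$ and producing the clean threshold at $i=k_n$ that forces the slope to be exactly $\frac1p$.
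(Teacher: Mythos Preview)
Your proof is correct and follows essentially the same approach as the paper's: both verify $p\nmid b_i$ for $0\le i\le k_n$, $p\mid b_i$ for $k_n+1\le i\le n$ with $\nu_p(b_n)=1$, deduce that the right-most slope of $NP_p(\Lnr)$ equals $1/p$, and then invoke Lemma~\ref{application of np}. The only cosmetic differences are that you phrase the valuation computations via base-$p$ carries (Kummer) rather than direct divisibility, and you apply Lemma~\ref{application of np} with $k=p-1$ instead of the paper's choice leading to the bound $2/n$; since $p-1\ge\lfloor n/2\rfloor$, both choices exclude the same range of factor degrees.
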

\begin{proof}
	Clearly $p \nmid b_0$. Since $p\mid n(n-1)\cdots (n-k_n)$, $p | \binom{n}{j}$ for $k_n+1 \leq j < p$. Also, $p \mid (r+1) \cdots (r+j)$ for $j \geq p$. Thus $p|b_j$ for $k_n +1 \leq j \leq n$.
	
	Note that  $r + k_n < p$ implies $p \nmid (r+1) \cdots (r+k_n)$ and $p\nmid n(n-1)\cdots (n-k_n+1)$. Thus $p \nmid (r+1) \cdots (r+j)$ and $p \nmid \binom{n}{j}$ for $1\leq j \leq k_n$. Therefore $p \nmid b_j$ for $1\leq j \leq k_n$.
	
	Next $r+n=r+k_n+p<2p$ implies $\nu_p(b_n) = \nu_p ((r+1) \cdots (r+n))=1$. Hence the vertices of the first edge of the Newton polygon are $(0,0)$ and $(k_n,0)$ and the slope of the right-most edge is		
		\begin{align*}
		\max\limits_{k_n \leq j < n} \left\{ \frac{\nu_p(b_n)-\nu_p(b_{j})}{n-j} \right\}.
		\end{align*}
 For $k_n<j<n$, we have $p|b_j$ implying $\nu_p(b_{j})\geq 1$. Hence $\nu_p(b_n)-\nu_p(b_{j})\leq 1-1=0$ for $k_n<j<n$. For $j=k_n$, we have 	
		\begin{align*}
		\frac{\nu_p(b_n)-\nu_p(b_{k_n})}{n-k_n}=\frac{1}{n-k_n} = \frac{1}{p}.
		\end{align*}
	Thus we have
		\begin{align*}
		\max\limits_{k_n \leq j < n} \left\{ \frac{\nu_p(b_n)-\nu_p(b_{j})}{n-j} \right\} \leq \frac{1}{p}<\frac{2}{n}
		\end{align*}
  since $p>\frac{n}{2}$. Therefore, by Lemma \ref{application of np}, $\Lnr$ cannot have a factor with degree in the interval $[k_n+1, \frac{n}{2}]$ and the assertion follows.
\end{proof}

\begin{lemma}\label{step2}
	Let $l_n \in[1,k_n]$ be the least positive integer such that there exists $p$ with $p | (n-l_n)$, $p > k_n$ and $\nu_p \left( \binom{n+r}{r} \right)=0$.
	Then $\Lnr$ cannot have a factor with degree in the interval $[l_n+1, k_n].$
\end{lemma}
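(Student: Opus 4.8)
The plan is to apply Lemma \ref{application of np} to the polynomial $\Lnr = \sum_{j=0}^{n} b_{n-j} x^j$ with the prime $p$ furnished by the definition of $l_n$, taking $l = l_n$ and $k = k_n$. Since the coefficient of $x^j$ in $\Lnr$ is $b_{n-j}$, the hypothesis that $p$ does not divide the leading coefficient is automatic because $b_0 = 1$, while the hypothesis that $p$ divides the coefficients indexed $0,\dots,n-l_n-1$ translates, after reversing indices, into the requirement $p \mid b_i$ for every $i$ with $l_n+1 \leq i \leq n$. The right-most edge of the Newton polygon is the one ending at $(n,\nu_p(b_n))$, so its slope is exactly $M_p = \max_{1\leq j \leq n}\mu_j$ from Remark \ref{remark}, and the last hypothesis becomes $M_p < \frac{1}{k_n}$. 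If $l_n = k_n$ the interval $[l_n+1,k_n]$ is empty and there is nothing to prove, so I may assume $l_n < k_n$.

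First I would verify the divisibility condition. Since $p \mid (n-l_n)$ we have $n \equiv l_n \pmod p$ with $1 \leq l_n \leq k_n < p$, so $l_n$ is the least positive residue of $n$ modulo $p$. For $l_n+1 \leq i \leq p-1$, the numerator $n(n-1)\cdots(n-i+1)$ of $\binom{n}{i}$ runs through the residues $l_n, l_n-1, \dots, l_n-i+1$ modulo $p$, which include $0$ precisely because $i \geq l_n+1$; as $i < p$ forces $\nu_p(i!)=0$, this yields $p \mid \binom{n}{i}$ and hence $p \mid b_i$. For $p \leq i \leq n$, the $i$ consecutive integers $r+1,\dots,r+i$ contain a multiple of $p$, so $p \mid (r+1)\cdots(r+i)$ and again $p \mid b_i$. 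Together these two ranges cover all $i$ with $l_n+1 \leq i \leq n$.

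Next I would bound the slope using the hypothesis $\nu_p\!\left(\binom{n+r}{r}\right)=0$. Substituting this into the expression for $\mu_j$ in Remark \ref{remark} gives $\mu_j \leq \frac{j-\s_p(j)}{(p-1)j} = \frac{\nu_p(j!)}{j}$. Since $\s_p(j)\geq 1$ we have $\nu_p(j!) = \frac{j-\s_p(j)}{p-1} \leq \frac{j-1}{p-1}$, whence $\mu_j \leq \frac{j-1}{(p-1)j} < \frac{1}{p-1}$ for every $j$. As $p$ is a prime with $p > k_n$ we have $p-1 \geq k_n$, so $M_p < \frac{1}{p-1} \leq \frac{1}{k_n}$, which is the required strict slope bound.

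With all three hypotheses of Lemma \ref{application of np} verified (using $a_0 = \cdots = a_n = 1$), I conclude that $\Lnr$ has no factor of degree in the interval $[l_n+1,k_n]$. The delicate points here are bookkeeping ones rather than any single hard step: correctly transporting the hypotheses of Lemma \ref{application of np} through the index reversal $b_j \leftrightarrow b_{n-j}$, and keeping the slope inequality strict. For the latter, the vanishing of $\nu_p\!\left(\binom{n+r}{r}\right)$ is exactly what removes the dangerous positive term $\frac{1}{j}\nu_p\!\left(\binom{n+r}{r}\right)$ from $\mu_j$, which is why that condition appears in the choice of $p$.
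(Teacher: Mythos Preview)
Your proof is correct and follows essentially the same approach as the paper's: verify $p\nmid b_0$, split the divisibility $p\mid b_i$ for $l_n+1\le i\le n$ into the ranges $i<p$ (via $p\mid\binom{n}{i}$) and $i\ge p$ (via $p\mid(r+1)\cdots(r+i)$), and then bound $M_p<\frac{1}{k_n}$ using Remark~\ref{remark} together with $\nu_p\binom{n+r}{r}=0$ and $p>k_n$. Your uniform estimate $\frac{j-\s_p(j)}{(p-1)j}\le\frac{j-1}{(p-1)j}<\frac{1}{p-1}$ is a slight streamlining of the paper's two-case split ($j\le p-1$ versus $j\ge p$), but the substance is identical.
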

\begin{proof}
	Clearly $p \nmid b_0$. Since $p\mid n(n-1)\cdots (n-l_n)$, $p | \binom{n}{j}$ for $l_n+1 \leq j < p$. Also $p \mid (r+1) \cdots (r+j)$ for $j \geq p$. Thus $p|b_j$ for $l_n +1 \leq j \leq n$.
	
	From Remark \ref{remark}, the slope of the right-most edge of $NP_p(\lnr)$ is equal to $M_P \leq \max\limits_{1 \leq j \leq n} \left\{ \frac{j-\s_p(j)}{(p-1)j} + \frac{1}{j} \nu_p \left( \binom{n+r}{r} \right) \right\}$.
	
	Note that $\frac{j-\s_p(j)}{(p-1)j} \leq 0$ if $j \leq p-1$ and $\frac{j-\s_p(j)}{(p-1)j} <\frac{1}{p-1}$ if $j \geq p$.
	Since $p > k_n$ and $\nu_p \left( \binom{n+r}{r} \right)=0$, we have
		\begin{align*}	
		M_p < \frac{1}{k_n}.
		\end{align*}
	Therefore, by Lemma \ref{application of np}, $\Lnr$ cannot have a factor with degree in the interval $[l_n+1, k_n].$
\end{proof}

\begin{lemma}\label{step3}
	Let $i$ be a positive integer such that $p|n(n-1)\cdots(n-i+1)(r+1)\cdots(r+i)$ and let $ \nu_p \left( \binom{n+r}{r} \right) = u$. Then $\Lnr$ cannot have a factor of degree equal to $i$ if any one of the following conditions holds:
	\begin{itemize}
		\item [$(a)$] $u=0$ and $p > i$,
		\item [$(b)$] $u>0,~ p>2$ and $\max \{\frac{u+1}{p}, \frac{\nu_p(n+r-z_0)-\nu_p(n)}{z_0+1} \} <\frac{1}{i}$, where $z_0\equiv n+r \pmod p$ with $0 \leq z_0 <p$.
	\end{itemize}	
\end{lemma}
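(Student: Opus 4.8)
The plan is to apply Lemma \ref{application of np} to $g(x)=\Lnr=\sum_{j=0}^{n}b_{n-j}x^{j}$ with the specific choice $l=i-1$ and $k=i$, so that the forbidden interval $[l+1,k]$ collapses to the single value $\{i\}$; the conclusion of that lemma then says precisely that $\Lnr$ has no factor of degree $i$. Since $\Lnr$ is monic, its leading coefficient $b_{0}=1$ is never divisible by $p$, so only two hypotheses remain to be checked: (1) $p$ divides the coefficient of $x^{j}$ for all $0\le j\le n-i$, i.e. $p\mid b_{m}$ for all $i\le m\le n$; and (2) the right-most edge of $NP_{p}(\Lnr)$ has slope $M_{p}<\frac{1}{i}=\frac{1}{k}$. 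Verifying (1) is quick; the technical heart is (2), especially in case $(b)$.

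For (1) I would use that, $p$ being prime, the divisibility hypothesis forces $p\mid(n-s)$ for some $0\le s\le i-1$ or $p\mid(r+t)$ for some $1\le t\le i$. If $m\ge p$, then among the $m\ge p$ consecutive integers $r+1,\dots,r+m$ at least one is a multiple of $p$, so $p\mid(r+1)\cdots(r+m)$ and hence $p\mid b_{m}$, regardless of which factor $p$ divides. If instead $i\le m<p$ (which forces $p>i$), then either $p\mid(r+t)$ with $t\le i\le m$, giving $p\mid(r+1)\cdots(r+m)$, or $p\mid(n-s)$ with $s\le i-1\le m-1$, so that $(n-s)$ occurs in the numerator of $\binom{n}{m}$ while $p\nmid m!$, whence $p\mid\binom{n}{m}$. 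Either way $p\mid b_{m}$, covering the whole range $i\le m\le n$.

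For (2) I would start from the formula in Remark \ref{remark}, $M_{p}=\max_{1\le j\le n}\mu_{j}$ with $\mu_{j}=\frac{\nu_{p}(j!)}{j}+\frac{u}{j}-\frac{1}{j}\nu_{p}\!\left(\binom{n+r-j}{r}\right)$. In case $(a)$, where $u=0$, dropping the last (non-negative) term gives $\mu_{j}\le\frac{\nu_{p}(j!)}{j}$, which equals $0$ for $j<p$ and is strictly less than $\frac{1}{p-1}\le\frac{1}{i}$ for $j\ge p$, because $\s_{p}(j)\ge 1$ and $p>i$. Hence $M_{p}<\frac{1}{i}$ and Lemma \ref{application of np} applies.

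Case $(b)$ is the main obstacle, and I would treat it by splitting the range of $j$ according to $z_{0}$ and $p$, with $n+r-z_{0}$ the largest multiple of $p$ not exceeding $n+r$. For $1\le j\le z_{0}$ none of $n+r,\dots,n+r-j+1$ is divisible by $p$, so the $\binom{n+r}{r}$ contribution to $\mu_{j}$ cancels and $\mu_{j}=-\frac{1}{j}\sum_{t=0}^{j-1}\nu_{p}(n-t)\le 0$ (here also $\nu_{p}(j!)=0$). For $z_{0}<j\le p-1$ the only multiple of $p$ picked up is $n+r-z_{0}$, so retaining just the $\nu_{p}(n)$ term gives $\mu_{j}\le\frac{\nu_{p}(n+r-z_{0})-\nu_{p}(n)}{j}\le\frac{\nu_{p}(n+r-z_{0})-\nu_{p}(n)}{z_{0}+1}$ (and $\mu_{j}<0$ if this numerator is negative). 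Finally, for $j\ge p$ I would bound $\mu_{j}\le\frac{\nu_{p}(j!)+u}{j}$ and use $\nu_{p}(j!)\le\frac{j-1}{p-1}$; the inequality $\frac{\nu_{p}(j!)+u}{j}\le\frac{u+1}{p}$ then reduces, after clearing denominators and using $u\ge 1$, to $(p-2)(j-p)\ge 0$, which holds exactly because $p>2$ and $j\ge p$ (the value $j=p$ accounting for the term $\frac{u+1}{p}$). Combining the three ranges yields $M_{p}\le\max\{\frac{u+1}{p},\frac{\nu_{p}(n+r-z_{0})-\nu_{p}(n)}{z_{0}+1}\}<\frac{1}{i}$ by hypothesis, and Lemma \ref{application of np} finishes the argument. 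The delicate points to get right are the bookkeeping of which powers of $p$ occur in the falling products that define $\mu_{j}$, and isolating precisely the two extremal slopes (at $j=z_{0}+1$ and at $j=p$) that produce the two entries of the stated maximum.
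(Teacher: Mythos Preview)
Your proposal is correct and follows essentially the same route as the paper: verify $p\mid b_m$ for $m\ge i$, then bound each $\mu_j$ and apply Lemma~\ref{application of np} with $l=i-1$, $k=i$. The only noteworthy difference is in case~$(b)$ for $j\ge p$: the paper splits into $p\le j<p^{2}$ and $j\ge p^{2}$, while you handle all $j\ge p$ at once via $\nu_p(j!)\le\frac{j-1}{p-1}$, which is slightly cleaner. One small slip: clearing denominators in $\frac{(j-1)/(p-1)+u}{j}\le\frac{u+1}{p}$ actually yields $(j-p)\bigl(u(p-1)-1\bigr)\ge 0$, not $(p-2)(j-p)\ge 0$; but since $u\ge 1$ and $p>2$ give $u(p-1)-1\ge p-2>0$, both factors are nonnegative and the conclusion is unaffected.
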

\begin{proof}
	Clearly $p \nmid b_0$. If $p|(r+1)\cdots(r+i),$ then $p|b_j$ for $i \leq j \leq n$. If $p\nmid (r+1)\cdots(r+i),$ then $p|n(n-1)\cdots(n-i+1)$
	implies $p | \binom{n}{j}$ for $i \leq j < p$. Also $p|(r+1) \cdots (r+j)$ for $j \geq p$. Thus $p|b_j$ for $i \leq j \leq n$.
	
	From Remark \ref{remark}, the slope of the right-most edge of $NP_p(\lnr)$ is $M_p=\max\limits_{1 \leq j \leq n} \{ \mu_j \} $ where
		\begin{align*}
		\mu_j \leq \frac{j-\s_p(j)}{(p-1)j} + \frac{u}{j}.
		\end{align*}
		
\noindent $(a) \ u=0$ and $p>i$. For $1\leq j\leq n$, we have
		\begin{align*}
		\mu_j \leq  \frac{j-\s_p(j)}{(p-1)j} < \frac{1}{p-1} \leq \frac{1}{i}.
		\end{align*}
	
\noindent $(b) \ u>0$ and $p>2$. We have
		\begin{align*}
		\mu_j &= \frac{\nu_p((n+r)!) - \nu_p((n+r-j)!) - \nu_p(\binom{n}{j})}{j} \\
		&= \frac{\nu_p((n+r)\cdots (n+r-j+1)) - \nu_p(\binom{n}{j})}{j}.
		\end{align*}
	For $1\leq j < p$, we have
		\begin{align*}
		\mu_j &\leq
			\begin{cases}
			0 & \textrm{if } j \leq z_0\\
			\frac{\nu_p(n+r-z_0)-\nu_p(n)}{j} & \textrm{if } j >z_0\\
			\end{cases}\\
		&\leq \frac{\nu_p(n+r-z_0)-\nu_p(n)}{z_0+1}.
		\end{align*}
	For $p \leq j < p^2$, we have
		\begin{align*}
		\mu_j \leq \frac{j-\s_p(j)}{(p-1)j} + \frac{u}{j} \leq \frac{1}{p}+ \frac{u}{p} = \frac{u+1}{p}.
		\end{align*}
	For $j \geq p^2,$ since $p>2$, we have
		\begin{align*}
		\mu_j \leq \frac{j-\s_p(j)}{(p-1)j} + \frac{u}{j} < \frac{1}{p-1}+ \frac{u}{p^2}<\frac{u+1}{p}.
		\end{align*}
	Thus, by the assumption on (b), for $1 \leq j\leq n$,
		\begin{align*}
		\mu_j \leq \max \left\{\frac{u+1}{p}, \frac{\nu_p(n+r-z_0)-\nu_p(n)}{z_0+1} \right\} <\frac{1}{i}.
		\end{align*}
	Hence $M_p < \frac{1}{i}$ and therefore, by Lemma \ref{application of np}, $\Lnr$ cannot have a factor of degree $i$.
\end{proof}

We need the following three lemmas for describing the Galois groups of $\lnr$. The third lemma is computational.

\begin{lemma} \label{our result on gp}
	Given that $\Lnr$ is irreducible, if there is a prime $p$ with $\frac{n}{2}<p<n-2$ and $r<p$, then $G_n(r)$ contains $A_n$.
\end{lemma}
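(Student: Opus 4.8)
The plan is to deduce the statement from Lemma \ref{newtonindex}. Since $\Lnr$ is assumed irreducible and $p$ is a prime with $\frac{n}{2} < p < n-2$, it suffices to show that $p$ divides the Newton index $\mathcal{N}_{\Lnr}$; then $p$ is a prime divisor of $\mathcal{N}_{\Lnr}$ in the required range, and Lemma \ref{newtonindex} forces $G_n(r)$ to contain $A_n$. By the definition of the Newton index, $p \mid \mathcal{N}_{\Lnr}$ will follow once I exhibit a single edge of $NP_p(\Lnr)$ whose slope, in lowest terms, has denominator $p$. I will produce an edge of slope exactly $\frac{1}{p}$.

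To locate such an edge I would compute the heights $\nu_p(b_j)$ of the points $(j,\nu_p(b_j))$, $0 \leq j \leq n$, defining $NP_p(\Lnr)$, where $b_j = \binom{n}{j}(r+1)\cdots(r+j)$ as in Remark \ref{remark}. Writing $n = p + n'$ with $1 \leq n' = n-p < p$ (possible since $\frac{n}{2} < p < n$), the identity $\nu_p\binom{n}{j} = \frac{\s_p(j)+\s_p(n-j)-\s_p(n)}{p-1}$ shows that $\nu_p\binom{n}{j} = 1$ precisely when $j \bmod p \in \{n'+1,\dots,p-1\}$ and is $0$ otherwise. Since $r < p$ and $p^2 > n+r$ (which holds throughout the range $\frac{n}{2} < p < n-2$, a range forcing $n \geq 8$), the quantity $\nu_p\big((r+1)\cdots(r+j)\big)$ merely counts multiples of $p$ in $[r+1,r+j]$: it is $0$ for $j < p-r$, equals $1$ for $p-r \leq j \leq 2p-r-1$, and equals $2$ for $j \geq 2p-r$. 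Adding the two contributions yields $\nu_p(b_j)$ explicitly, and the lower convex hull can be read off.

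The picture splits according to whether $\nu_p\binom{n+r}{r}$ is $0$ or $1$, that is, whether $n+r < 2p$ or $n+r \geq 2p$. When $n+r < 2p$, the points of height $0$ are exactly those with $j \in [0,n']$, and the right-most edge runs from $(n-p,0)$ to $(n,1)$, of slope $\frac{1}{p}$. When $n+r \geq 2p$, the slope-$\frac{1}{p}$ edge is interior: it runs from $(p-r-1,0)$ to $(2p-r-1,1)$, again of horizontal length $p$ and height $1$, and is followed by the right-most edge from $(2p-r-1,1)$ to $(n,2)$ of slope $\frac{1}{n-2p+r+1} > \frac{1}{p}$. In both cases $NP_p(\Lnr)$ carries an edge of slope $\frac{1}{p}$, so $p \mid \mathcal{N}_{\Lnr}$ and the conclusion follows.

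The main work will be the convex-hull verification in the case $n+r \geq 2p$, since there the slope-$\frac{1}{p}$ edge is not the right-most one. I would check that every $(j,\nu_p(b_j))$ lies on or above the two displayed lines, that $(2p-r-1,1)$ is a genuine vertex (so that the chord from $(n-p,1)$ to $(n,2)$ is \emph{not} an edge), and that the two slopes are increasing, the last point reducing to $n+r \leq 3p-2$, which is immediate from $n < 2p$ and $r < p$. The case $n+r < 2p$ is the easier one and essentially recovers the situation of Lemma \ref{lemma for gp}$(i)$.
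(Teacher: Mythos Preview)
Your proposal is correct and follows essentially the same approach as the paper: both explicitly compute the heights $\nu_p(b_j)$, split into the two cases $n+r<2p$ and $n+r\geq 2p$, identify an edge of $NP_p(\Lnr)$ with slope exactly $\frac{1}{p}$ (the right-most edge in the first case, the penultimate one in the second), and then invoke Lemma~\ref{newtonindex}. Apart from cosmetic differences in notation (the paper writes $n_0=n-p$ and $r_0=p-r$ where you write $n'$ and $p-r$), the arguments coincide, including the verification that the slopes increase via $n+r\leq 3p-2$.
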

\begin{proof} Let $n_0=n-p$ and $r_0=p-r$. For $1 \leq j \leq n$, we have
	\begin{align*}
	\nu_p\left(\binom{n}{j}\right)=\nu_p\left(\frac{n(n-1)\cdots(n-j+1)}{j!}\right)=
		\begin{cases}
		1 & \textrm{if } n_0<j<p,\\
		0 & \textrm{otherwise.}
		\end{cases}
	\end{align*}
	
	First assume that $r+n<2p$. Note that $r_0>n_0$ and $r_0+p=r_0+n-n_0>n$. Thus $r+r_0=p$ is the only multiple of $p$ in the product $(r+1)(r+2)\cdots(r+n)$. So for $1 \leq j \leq n$, we have
		\begin{align*}
		\nu_p((r+1)(r+2)\cdots(r+j))=
			\begin{cases}
			0 & \textrm{if } j<r_0,\\
			1 & \textrm{otherwise}.
			\end{cases}
		\end{align*}
	Therefore $NP_p(\Lnr)$ is given by the lower edges along the convex hull of the points:
	$$(0,0),\dots,(n_0,0),(n_0+1,1),\dots,(r_0-1,1),(r_0,2),\dots,(p-1,2),(p,1),\dots,(n,1).$$
	Thus the vertices of $NP_p(\Lnr)$ are $(0,0),(n_0,0)$ and $(n,1)$. Hence $\frac{1}{p}$ is a slope of $NP_p(\Lnr)$ and it follows from Lemma \ref{newtonindex} that $G_n(r)$ contains $A_n$.
	
	Next assume that $r+n \geq 2p$. Since $r+n<3p$, $r+r_0=p$ and $r+r_0+p=2p$ are the only multiples of $p$ in the product $(r+1)(r+2)\cdots(r+n)$. So for $1 \leq j \leq n$, we have
		\begin{align*}
		\nu_p((r+1)(r+2)\cdots(r+j))=
			\begin{cases}
			0 & \textrm{if } j<r_0,\\
			1 & \textrm{if } r_0\leq j<r_0+p,\\
			2 & \textrm{if } j\geq r_0+p.
			\end{cases}
		\end{align*}
	Therefore in this case $NP_p(\Lnr)$ is given by the lower edges along the convex hull of the points:
	$$(0,0),\dots,(r_0-1,0),(r_0,1),\dots,(r_0+p-1,1),(r_0+p,2),\dots,(n_0,2),(n_0+1,3),\dots,$$
	$$(p-1,3),(p,2),\dots,(n,2).$$
	Thus the vertices of $NP_p(\Lnr)$ are $(0,0),(r_0-1,0),(r_0+p-1,1)$ and $(n,2)$. Hence $\frac{1}{p}$ is one of the slopes of $NP_p(\Lnr)$ and it follows from Lemma \ref{newtonindex} that $G_n(r)$ contains $A_n$.
\end{proof}

\begin{lemma} \label{squares}
	Let $m \geq 197$ be an odd integer and let $t\leq 60$ be an even integer. Then the product of any two distinct terms in the set $\{m+2, m+4,\dots, m+t\}$ cannot be a square.
\end{lemma}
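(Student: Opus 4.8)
The plan is to argue by contradiction. Suppose two distinct terms $m+2a$ and $m+2b$ with $1 \le a < b \le t/2$ have a product that is a perfect square. Since $m$ is odd while $2a$ and $2b$ are even, both factors are odd integers, and since $a \ge 1$ each is at least $m+2 \ge 199$. Their difference is $2(b-a) \le 2\bigl(\tfrac{t}{2}-1\bigr) = t-2 \le 58$. So the assertion reduces to the following: two odd integers that are each at least $199$ and differ by at most $58$ cannot have a square product. This is the form I would work with.

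To exploit the square condition I would pass to the greatest common divisor. Let $d = \gcd(m+2a, m+2b)$; as a divisor of odd numbers, $d$ is odd, and $d \mid (m+2b)-(m+2a) = 2(b-a)$, so in fact $d \mid (b-a)$. Factoring out $d$ produces coprime cofactors whose product is a square, and coprime factors of a square are themselves squares; thus $m+2a = d u^2$ and $m+2b = d v^2$ with $\gcd(u,v)=1$ and $u<v$. Because $d u^2$ and $d v^2$ are odd and $d$ is odd, both $u$ and $v$ must be odd, and being distinct odd numbers they satisfy $v \ge u+2$. This last point is exactly the leverage the parity of $m$ provides.

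The contradiction should then emerge from a size estimate that plays two competing constraints against each other. On one hand $d u^2 = m+2a \ge 199$ forces $u$ to be large relative to $d$: indeed $du = \sqrt{d \cdot d u^2} \ge \sqrt{199\,d} \ge \sqrt{199}$. On the other hand the two terms are very close together, and since $v-u \ge 2$ and $v+u \ge 2u+2$,
\[
2(b-a) = d(v^2-u^2) = d(v-u)(v+u) \ge d\cdot 2\cdot(2u+2) = 4d(u+1).
\]
Combining these gives $4d(u+1) = 4du + 4d \ge 4\sqrt{199}+4 > 58$, which contradicts $2(b-a) \le 58$. Hence no such square product can exist.

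The heart of the argument — and the step I would check most carefully — is this final numerical clash: once a square multiple of $d$ exceeds $199$, the gap up to the next such value with $u,v$ both odd is already strictly larger than the maximal admissible spacing $58$. This is precisely where the hypotheses $m \ge 197$ and $t \le 60$ enter, and it is worth noting how little slack they leave, since $4\sqrt{199}+4 \approx 60.4$ sits only just above $58$. I do not expect a deeper obstacle: the parity argument forcing $v \ge u+2$ (rather than merely $v \ge u+1$) is what supplies the needed factor of $2$, and the bound $4d(u+1) \ge 4\sqrt{199\,d}+4d$ is increasing in $d$, so the elementary inequality holds uniformly over all admissible $d$ without any case analysis.
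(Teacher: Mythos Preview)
Your proof is correct and follows essentially the same approach as the paper: write the two terms as $d u^2$ and $d v^2$ (the paper uses the common squarefree part rather than the $\gcd$, but this is immaterial), use oddness to force $v-u\ge 2$, and then derive a size contradiction from the gap bound $t-2$ against the lower bound on the smaller term. The numerics are packaged slightly differently (you bound $4d(u+1)>58$, the paper bounds $ax\le 14$ and hence $m\le 195$), but the argument is the same.
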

\begin{proof}
	Suppose $(m+2i)(m+2j)$ is a square with $1 \leq i<j \leq \frac{t}{2}$. We may assume $m+2i=ax^2$ and $m+2j=ay^2$ where $y-x\geq 2$.
	Then $t-2 \geq 2(j-i)=a(y-x)(y+x)\geq 2a(y+x) \geq 4ax$. Therefore $x \leq ax \leq \lfloor\frac{t-2}{4}\rfloor\leq \lfloor\frac{60-2}{4}\rfloor=14$. Hence $m \leq ax^2\leq (14)^2$ which implies $m\leq 195$, a contradiction.	
\end{proof}

\begin{lemma}\label{primefact}
	There is a prime in every set of 20 consecutive positive integers each $\leq 1148$.
\end{lemma}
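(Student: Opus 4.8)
The plan is to reduce this to a finite statement about gaps between consecutive primes. First I would observe that it suffices to exhibit an increasing chain of primes $2 = q_0 < q_1 < \dots < q_m = 1129$ in which every consecutive difference satisfies $q_{i+1} - q_i \leq 20$. Granting such a chain, take any block $\{a, a+1, \dots, a+19\}$ of $20$ consecutive positive integers with $a + 19 \leq 1148$, so that $1 \leq a \leq 1129$. If this block contained no $q_i$, then the interval $[a, a+19]$ would lie in one of the gaps determined by the chain: it cannot lie below $q_0 = 2$, and it cannot lie strictly above $q_m = 1129$ (that would force $a \geq 1130$ and hence $a + 19 \geq 1149 > 1148$), so it must lie strictly between two consecutive terms $q_i < q_{i+1}$. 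This gives $q_i + 1 \leq a$ and $a + 19 \leq q_{i+1} - 1$, whence $q_{i+1} - q_i \geq 21$, contradicting the gap bound. Hence every such block meets the chain and in particular contains a prime.

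It then remains to verify that consecutive primes up to $1129$ never differ by more than $20$, which is a finite computation. Here one can exploit that, apart from the gap $2,3$, every gap between consecutive primes is even, so a gap of length $\geq 21$ is in fact of length $\geq 22$; the computation shows that the first gap of length $22$ occurs between the consecutive primes $1129$ and $1151$, while the largest gap below $1129$ is $20$ (for instance between $887$ and $907$). Thus the chain consisting of all primes up to $1129$ has all consecutive differences at most $20$, which is precisely what the argument of the previous paragraph requires (and in fact any sparse subchain with differences $\leq 20$ would do equally well).

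Finally, I would record why the value $1148$ is sharp, since this is what fixes the boundary: the $21$ integers $1130, 1131, \dots, 1150$ are all composite, so $\{1130, \dots, 1149\}$ is a block of $20$ consecutive composites whose largest element $1149$ exceeds $1148$ by exactly $1$. Consequently the genuine content of the lemma is only this bounded primality check, and the sole ``obstacle'' is bookkeeping: correctly locating the first prime gap of length at least $21$ and confirming that the conclusion persists right up to, but not past, $1148$. There is no structural difficulty to overcome.
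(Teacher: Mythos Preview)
Your proposal is correct and is essentially the same approach as the paper's: the paper gives no argument at all, merely announcing beforehand that ``the third lemma is computational,'' so your reduction to checking that consecutive primes up to $1129$ have gaps at most $20$ (with the first gap of length $22$ occurring at $1129,\,1151$) is exactly the intended verification, just written out explicitly.
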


\section{Irreducibility of $\lnr$: Proof of Theorem \ref{mainthm1}$(i)$} \label{proofpart1}
In this section, we give proof of Theorem \ref{mainthm1}$(i)$ by showing that $\lnr$ is irreducible for each $23\leq r\leq 60$ and $n \geq 1$.
Recall that for fixed integers $r \geq 0$ and $n \geq 1$, $n=n_{0}n_{1}$ where
$$n_{0} := \prod\limits_{p|n,\ p\nmid \binom{n+r}{r}} p^{\nu_{p}(n)} \textrm{ and } n_{1} := \prod\limits_{p|\gcd(n,\binom{n+r}{r}) } p^{\nu_{p}(n)}.$$

Let $23\leq r \leq 60$ and $n\geq 1$ be integers. Suppose $\lnr$ has a factor of degree $k$. By Lemma \ref{degree}, we have $n_0|k$. So if $n_0\geq 2$, then $k\geq 2$ and thus Lemma \ref{rk2} implies $r>3.42k+1$, i.e., $n_0\leq k < \frac{r-1}{3.42}$. Therefore we have $1 \leq n_0 \leq \left\lfloor\frac{r-1}{3.42}\right\rfloor$ for each value of $r$.

Fix $r$ with $23\leq r \leq 60$. For each $n_0$, we have
	\begin{align*}
	\{n=n_0n_1: p^{\nu_p(n_1)} \leq r~ \forall p \} \subseteq \{n: p^{\nu_p(n)} \leq r~ \forall p \}.
	\end{align*}
Since $\left\lfloor\frac{r-1}{3.42}\right\rfloor\geq {\rm max}\{n_0,\sqrt{r}\}$, if $p|n$ with $p>\left\lfloor\frac{r-1}{3.42}\right\rfloor$, then $p|n_1$ and $r<p^2$.
Thus, by Lemma \ref{small values}, Lemma \ref{ppower} and Corollary \ref{cong}, it is enough to check irreducibility of $\lnr$ for $n \in H_r$ where
	\begin{small}
	\begin{align*}
	H_r=\{n\in \N: n > 127 \textrm{ and for each } p|n,~  p^{\nu_p(n)} \leq r \textrm{ and if } p > \left\lfloor\frac{r-1}{3.42}\right\rfloor \textrm{then } d+\left\lfloor\frac{r}{p}\right\rfloor\geq p \}
	\end{align*}
	\end{small}
where $1\leq d<p$ and $d \equiv \frac{n}{p} \pmod {p}$. (Note that $d \ne 0$ since if $p > \left\lfloor\frac{r-1}{3.42}\right\rfloor$, then $p^2\nmid n$).

For each $n \in H_r$, we compute $k_n$ and $l_n$ (defined respectively in Lemma \ref{step1} and Lemma \ref{step2}). We find that $l_n \leq 3$ for each $n \in H_r$ and it follows that $k\leq l_n \leq 3$. For $1\leq i \leq 3$, we define $H_{i,r}=\{n\in H_r: l_n \geq i\}$. To obtain a contradiction, we need to prove non-existence of a factor of degree $i$ for each $n \in H_{i,r}$. For this we use Lemma \ref{step3} and we are left with $(n,r)\in T$ for which $\lnr$ may have a factor of degree 1, where $T$ is given by
	\begin{align*}
	T=\{&(144,23),(144,25),(144,26),(144,51),(144,53),(216,29),(216,31), (216,42),\\
	&(216,44),(216,47),(216,49),(216,53),(216,59),(240,35), (288,40),(288,41),\\
	&(288,47),(288,48),(288,51),(288,53),(312,26),(600,26),(720,31),(1440,35),\\
	&(4320,55)\}.
	\end{align*}

Observe that if $p|n$, then $p | \binom{n}{j}$ for $1 \leq j < p$. Also $p|(r+1) \cdots (r+j)$ for $j \geq p$. Thus if $p|n$, then $p|b_j$ for all $1 \leq j \leq n$. Since $2|n$ and $3|n$ for each $n$ given in $T$, to remove the existence of a factor of degree $1$, by Lemma \ref{application of np}, it suffices to show that the slope of the right-most edge of $NP_p(\lnr)$, for either $p=2$ or $p=3$, is less than $1$.

By Remark \ref{remark}, it suffices to show that $\mu_j<1$ for each $1\leq j\leq n$, for either $p=2$ or $p=3$, where
	\begin{align}
	\mu_j &= \frac{\nu_p((r+n)(r+n-1)\cdots(r+n-j+1)) - \nu_p(\binom{n}{j})}{j}. \label{eq} 
	\end{align}
By Remark \ref{remark} again, we have
	\begin{align*}
	\mu_j &\leq \frac{j-\s_p(j)}{(p-1)j} + \frac{1}{j} \nu_p \left( \binom{n+r}{r} \right).
	\end{align*}
It can be easily observed that
	\begin{align*}
	\frac{j-\s_p(j)}{(p-1)j} + \frac{1}{j} \nu_p \left( \binom{n+r}{r} \right)<1
	\end{align*}
if and only if
	\begin{align}
	(p-1)\nu_p \left( \binom{n+r}{r} \right) &< (p-2)j+\s_p(j). \label{ineq1}
	\end{align}

Let $(n,r)\in T\setminus\{(216,29),(4320,55)\}$. We take $p=3$. In this case, the inequality \eqref{ineq1} is equivalent to
\begin{align}
2\nu_3 \left( \binom{n+r}{r} \right) &< j+\s_3(j). \label{ineq2}
\end{align}
For each $(n,r)\in T\setminus\{(216,29),(4320,55)\}$, we have $\nu_3 \left( \binom{n+r}{r} \right) \leq4$. Thus \eqref{ineq2} holds for $j\geq8$. For $j<8$, we verify that $\mu_j<1$ by exact computation of $\mu_j$ using \eqref{eq}.\\
 
Let $(n,r)\in \{(216,29),(4320,55)\}$. Suppose $x+a$ is a factor of $\Lnr$. Observe that $\Lnr$ is a monic polynomial whose coefficients are positive integers and hence the root $-a$ is a negative integer, i.e., $a \in \Z^+$. Note that for any prime $p$, $NP_p(x+a)$ consists of exactly one edge joining $(0,0)$ and $(1,\nu_p(a))$ which has slope $\nu_p(a)$ and therefore it follows from Lemma \ref{dumas} that $\nu_p(a)$ is the slope of an edge in $NP_p(\Lnr)$. Thus the non-negative integral slopes of $NP_p(\Lnr)$ are the only possible choices of $\nu_p(a)$. Consider the set 
$$I_p=\{\textrm{non-negative integral slopes of } NP_p(\Lnr) \}.$$
Note that for any prime $p$ such that $I_p \subseteq \{0\}$, we have $p\nmid a$. Therefore we may restrict to $p$ such that $I_p\cap \Z^+ \ne \phi$. 

Next we claim that for any prime $p$, we have $0\in I_p$ if and only if $p\nmid n(r+1)$. In fact, if there is an edge of slope $0$ in $NP_p(\Lnr)$, then we must have $\nu_p(n(r+1))=\nu_p(b_1)=0$. On the other hand, if $p|(r+1),$ then $p|b_j$ for all $1 \leq j \leq n$. If $p\nmid (r+1),$ then $p|n$ implies $p | \binom{n}{j}$ for $1 \leq j < p$. Also $p|(r+1) \cdots (r+j)$ for $j \geq p$. Thus $p|b_j$ for all $1 \leq j \leq n$. This implies $\nu_p(b_j)>0$ for all $1 \leq j \leq n$. Since $b_0=1$, the first point of $NP_p(\Lnr)$ is $(0,0)$ and hence it follows that there is no edge of slope $0$ in $NP_p(\Lnr)$. This proves the claim. We will use this claim without mentioning.

Now we determine the positive integral slopes of $NP_p(\Lnr)$ in the following cases depending upon $p$.\\

\noindent\textbf{Case:} $\mathbf{p=2.}$ For $(n,r)=(216,29)$, we compute that the slope of the right-most edge of $NP_2(\Lnr)$ is $M_2=1$. Thus for $(n,r)=(216,29)$, $I_2=\{1\}$.

For $(n,r)=(4320,55)$, we compute that $M_2=\mu_{32}=\frac{17}{16}<2$ and that the right-most edge has vertices $(n-32, \nu_2(b_n-32))$ and $(n,\nu_2(b_n))$. Thus the second-last edge of $NP_2(\Lnr)$ (which lies before the right-most edge) has slope 
\begin{align*}
\max\limits_{33\leq  j\leq n} \left\{ \frac{\nu_2(b_{n-32})-\nu_2(b_{n-j})}{j-32} \right\}.
\end{align*}
For each $33\leq j \leq n$, we calculate that
\begin{align*}
\frac{\nu_2(b_{n-2})-\nu_2(b_{n-j})}{j-32} &=  \frac{\nu_2((n+r-32)!) - \nu_2((n+r-j)!) +\nu_2(\binom{n}{32}) - \nu_2(\binom{n}{j})}{j-32}\\
& <1.
\end{align*}
Therefore for $(n,r)=(4320,55)$, $I_2=\phi$.\\

\noindent\textbf{Case:} $\mathbf{p=3.}$ For $(n,r)=(216,29)$, we compute that $M_3=1$. Thus for $(n,r)=(216,29)$, $I_3=\{1\}$.

For $(n,r)=(4320,55)$, we compute that the slope of the right-most edge of $NP_3(\Lnr)$ is $M_3=\mu_2=2$ and that the right-most edge has vertices $(n-2, \nu_3(b_n-2))$ and $(n,\nu_3(b_n))$. Thus the second-last edge of $NP_3(\Lnr)$ (which lies before the right-most edge) has slope 
	\begin{align*}
	\max\limits_{3 \leq j \leq n} \left\{ \frac{\nu_3(b_{n-2})-\nu_3(b_{n-j})}{j-2} \right\}.
	\end{align*}
For each $3\leq j \leq n$, we calculate that
	\begin{align*}
	\frac{\nu_3(b_{n-2})-\nu_3(b_{n-j})}{j-2} &=  \frac{\nu_3((n+r-2)!) - \nu_3((n+r-j)!) +\nu_3(\binom{n}{2}) - \nu_3(\binom{n}{j})}{j-2}\\
	& <1.
	\end{align*}
Therefore for $(n,r)=(4320,55)$, $I_3=\{2\}$.\\

\noindent\textbf{Case:} $\mathbf{p=5.}$ For $(n,r)=(216,29)$, we compute that $M_5=1$. Thus for $(n,r)=(216,29)$, $I_5=\{1\}$. 

For $(n,r)=(4320,55)$, we compute that $M_5=\mu_1=3$. That is, the right-most edge has vertices $(n-1, \nu_5(b_n-1))$ and $(n,\nu_5(b_n))$ and thus the second-last edge of $NP_5(\Lnr)$ (which lies before the right-most edge) has slope 
	\begin{align*}
	\max\limits_{2 \leq j \leq n} \left\{ \frac{\nu_5(b_{n-1})-\nu_5(b_{n-j})}{j-1} \right\}.
	\end{align*}
For each $2\leq j \leq n$, we calculate that
	\begin{align*}
	\frac{\nu_5(b_{n-1})-\nu_5(b_{n-j})}{j-1} &=  \frac{\nu_5((n+r-1)!) - \nu_5((n+r-j)!) +\nu_5(n) - \nu_5(\binom{n}{j})}{j-1}\\
	& <1.
	\end{align*}
Therefore for $(n,r)=(4320,55)$, $I_5=\{3\}$.\\

\noindent\textbf{Case:} $\mathbf{p=7.}$ For $(n,r)=(216,29)$, we compute that $M_7=\mu_1=2$. Thus for $(n,r)=(216,29)$, $I_7=\{0,2\}$. So the right-most edge has vertices $(n-1, \nu_7(b_n-1))$ and $(n,\nu_7(b_n))$ and thus the second-last edge of $NP_7(\Lnr)$ (which lies before the right-most edge) has slope 
	\begin{align*}
	\max\limits_{2 \leq j \leq n} \left\{ \frac{\nu_7(b_{n-1})-\nu_7(b_{n-j})}{j-1} \right\}.
	\end{align*}
For each $2\leq j \leq n$, we calculate that
	\begin{align*}
	\frac{\nu_7(b_{n-1})-\nu_7(b_{n-j})}{j-1} &=  \frac{\nu_7((n+r-1)!) - \nu_7((n+r-j)!) +\nu_7(n) - \nu_7(\binom{n}{j})}{j-1}\\
	& <1.
	\end{align*}
Therefore for $(n,r)=(216,29)$, $I_7=\{2\}$.

For $(n,r)=(4320,55)$, we compute that $M_7=1$. Thus for $(n,r)=(4320,55)$, $I_7=\{1\}$.\\

\noindent\textbf{Case: } $\mathbf{p>7.}$ For $(n,r)\in \{(216,29),(4320,55)\}$, by looking at the prime factorization of $ \binom{n+r}{r} $, we find that 
	\begin{align*}
	\nu_p \left( \binom{n+r}{r} \right) \leq 2.
	\end{align*}
For $j\geq p$, by Remark \ref{remark}, we have 
	\begin{align*}
	\mu_j &\leq \frac{j-\s_p(j)}{(p-1)j} + \frac{1}{j} \nu_p \left( \binom{n+r}{r} \right)\\
	&\leq \frac{1}{(p-1)} + \frac{2}{j}\\
	&\leq \frac{1}{(p-1)} + \frac{2}{p} < \frac{3}{p-1} \leq \frac{3}{10}< 1.
	\end{align*}
For $2<j<p$, by Remark \ref{remark} again, we have 
	\begin{align*}		\mu_j &\leq \frac{j-\s_p(j)}{(p-1)j} + \frac{1}{j} \nu_p \left( \binom{n+r}{r} \right)\\
	&=\frac{1}{j} \nu_p \left( \binom{n+r}{r} \right) \leq \frac{2}{j} < 1.
	\end{align*}
For each prime $p>7$ and $1\leq j\leq2$, we verify by exact computation that $\mu_j<1$. Therefore $M_p<1$, i.e., $I_p \subseteq\{0\}$ for each $p>7$. Hence for each $p>7$, $p$ cannot divide $a$, i.e., $p\nmid a$.\\

Let $(n,r)=(216,29)$. Then $\nu_2(a)=\nu_3(a)=\nu_5(a)=1$ and  either $7\nmid a$ or $\nu_7(a)=2$. Hence $a\in \{30, 1470\}$. We verify that $x=-30$ and $x=-1470$ do not satisfy $\mathcal{L}_{n}^{<r>}(x)=0$.

Let $(n,r)=(4320,55)$. Then $2\nmid a$,  $\nu_3(a)=2$, $\nu_5(a)=3$ and $\nu_7(a)=1$. Hence $a=7875$. We verify that $x=-7875$ does not satisfy $\mathcal{L}_{n}^{<r>}(x)=0$. 
\qed

\section{Galois groups of $\lnr$: Proof of Theorem \ref{mainthm1}$(ii)$} \label{proofpart2}
In this section, we prove Theorem \ref{mainthm1}$(ii)$ by describing the Galois groups of $\lnr$ for $23\leq r\leq 60$ and $n \geq 1$. From Section \ref{proofpart1}, we have
$\lnr$ is irreducible for each $23\leq r \leq 60$ and $n\geq1$.

For $23\leq r\leq 60$, let $B_r$ be given by
\begin{align*}
	B_{23}&=B_{24}=\dots =B_{28}=\{1,2,\dots,31\},\\
	B_{29}&=B_{30}=\{1,2,\dots,33\},\\
	B_{31}&=B_{32}=\dots =B_{36}=\{1,2,\dots,39\},\\
	B_{37}&=B_{38}=\dots= B_{40}=\{1,2,\dots,43\},\\
	B_{41}&=B_{42}=\{1,2,\dots,45\},\\
	B_{43}&=B_{44}=\dots =B_{46}=\{1,2,\dots,49\},\\
	B_{47}&=B_{48}=\dots =B_{52}=\{1,2,\dots,55\},\\
	B_{53}&=B_{54}=\dots =B_{58}=\{1,2,\dots,61\},\\
	B_{59}&=B_{60}=\{1,2,\dots,63\}.
\end{align*}

For each $23\leq r\leq60$ and $n\in B_r$, we compute $G_n(r)$ using MAGMA online, and in fact, $G_n(r)=A_n$ for $(n,r)\in \{(4,24),(5,28),(24,25),(25,24),(28,23),(28,29),(32,33),$\\ $(33,36),(36,37),(40,41),(44,45),(48,49),(48,51),(49,48),(49,50),(52,53),(56,57)\}$\\ and $G_n(r)=S_n$ otherwise.

From now onwards, we assume that $n\notin B_r$. We first show that $G_n(r)$ contains $A_n$.

Fix $r$ with $23\leq r\leq 60$. We have $\max\{48-r,8+\frac{5r}{3}\}=8+\frac{5r}{3}$. Let
$$C_r=\{n \in \N : n< 8+\frac{5r}{3} \textrm{ and } \nexists \textrm{ a prime } p \textrm{ with } \frac{n+r}{2}<p<n-2 \}.$$
Observe that $C_r$ is finite and $B_r \subseteq C_r$.
By Lemma \ref{lemma for gp} $(i)$ and $(ii)$,  we have $G_n(r)$ contains $A_n$ for each $n\notin C_r$. For $n \in C_r$, we now apply Lemma \ref{our result on gp} to get $G_n(r)$ contains $A_n$ for each $n\in C_r$, $n\notin B_r$. Hence $G_n(r)$ contains $A_n$ for $n\notin B_r$.

Thus, by Lemma \ref{lemma for gp}$(iii)$, we have
	\begin{align*}
	G_n(r)=	\begin{cases}
		A_n & \textrm{if } \Dt \textrm{is a square,}\\
		S_n & \textrm{otherwise.}\\
		\end{cases}
	\end{align*}
Therefore to complete the proof of Theorem \ref{mainthm1}$(ii)$, it suffices to check if $\Dt$ is a square or not. In fact, we show that for each $23\leq r\leq60$ and $n\notin B_r$, $\Dt$ is never a square.

For integers $a$ and $b$, we write $a\sim b$ if $a=bc^2$ for some integer $c>0$. Let $\square$ denote the square of an unspecified non-zero integer. We consider the following cases:\\

\noindent\textbf{Case 1.} $n$ is odd:
We have
$$\Dt \sim (-1)^{n(n-1)/2}(1\cdot3\cdot5\cdots n)(n+r-1)(n+r-3)\cdots(r+2).$$
If $n \equiv 3 \pmod 4$, then $\Dt$ is not a square. Thus assume $n \equiv 1 \pmod 4$.

\noindent\textbf{Subcase 1(a).} r is even:
By re-arranging the factors, we see that
$$\Dt \sim (1\cdot3\cdot5\cdots (r-1))((r+1)(r+2)\cdots n)(n+1)(n+3)\cdots(n+r-1).$$

For $n > \frac{3(r-1)}{2}$, we have
	\begin{align*}
	\frac{n+r-1}{2}<\frac{5}{6}n.
	\end{align*}
By Lemma \ref{harborthkemnitz} with $x=\frac{5}{6}n$, there is a prime $p$ satisfying
	\begin{align*}
	\frac{n+r-1}{2}<p<n
	\end{align*}
so that $\nu_p(\Dt)$ is odd, and hence $\Dt$ is not a square.

For $n \leq \frac{3(r-1)}{2}$ with $n\notin B_r$, we check directly that $\Dt$ is not a square.

\noindent\textbf{Subcase 1(b).} r is odd:
By re-arranging the factors, we see that
\begin{align}
\Dt \sim (1\cdot3\cdot5\cdots r)(n+2)(n+4)\cdots(n+r-1). \label{d1}
\end{align}
If $n\leq1089$, then $n+r-1 \leq 1148$ and since there are at least 10 consecutive odd integers in $\{n+2,n+4,\dots,n+r-1\}$, it follows from Lemma \ref{primefact} that there is a prime $p$ in this set. We note that $n\notin B_r$ implies $n \geq r+4$ and thus we have
	\begin{align*}
	r\leq n+2\leq p \leq n+r-1<2n<2p.
	\end{align*}
Hence we get $\nu_p(\Dt)$ is odd. Therefore $\Dt$ is not a square.

Now suppose that $n>1089$ and $\Dt$ is a square. For fixed odd $23\leq r\leq 60$, we focus on the expression on the right hand side of \eqref{d1} and find the squarefree integer $y_r$ such that 
\begin{align*}
1\cdot3\cdot5\cdots r= y_r \times \square.
\end{align*}
Thus for $x_{n,r}=y_r(n+2)(n+4)\cdots(n+r-1)$, we have
\begin{align*}
(1\cdot3\cdot5\cdots r)(n+2)(n+4)\cdots(n+r-1) =x_{n,r} \times \square
\end{align*}
so that $\Dt \sim x_{n,r}$, i.e., $\Dt$ is a square if and only if $x_{n,r}$ is a square. We give the list of $x_{n,r}$ for odd $r$ in the range $23\leq r\leq60$  in Table \ref{table1}.

\begin{table}[h!]
\caption{List of $r$ and $x_{n,r}$ where $\Dt \sim x_{n,r}$ }
\label{table1}
\begin{tabular}{c l}
	\hline
	$r$ & \hspace{4 cm}$x_{n,r}$ \\
	\hline
	23&$(3\cdot11\cdot13\cdot17\cdot19\cdot23)(n+2)(n+4)\cdots(n+22)$\\
	25& $(3\cdot11\cdot13\cdot17\cdot19\cdot23)(n+2)(n+4)\cdots(n+24)$\\
	27&$(11\cdot13\cdot17\cdot19\cdot23)(n+2)(n+4)\cdots(n+26)$\\
	29&$(11\cdot13\cdot17\cdot19\cdot23\cdot29)(n+2)(n+4)\cdots(n+28)$\\
	31&$(11\cdot13\cdot17\cdot19\cdot23\cdot29\cdot31)(n+2)(n+4)\cdots(n+30)$\\
	33&$(3\cdot13\cdot17\cdot19\cdot23\cdot29\cdot31)(n+2)(n+4)\cdots(n+32)$\\
	35&$(3\cdot5\cdot7\cdot13\cdot17\cdot19\cdot23\cdot29\cdot31)(n+2)(n+4)\cdots(n+34)$\\
	37&$(3\cdot5\cdot7\cdot13\cdot17\cdot19\cdot23\cdot29\cdot31\cdot37)(n+2)(n+4)\cdots(n+36)$\\
	39&$(5\cdot7\cdot17\cdot19\cdot23\cdot29\cdot31\cdot37)(n+2)(n+4)\cdots(n+38)$\\
	41&$(5\cdot7\cdot17\cdot19\cdot23\cdot29\cdot31\cdot37\cdot41)(n+2)(n+4)\cdots(n+40)$\\
	43&$(5\cdot7\cdot17\cdot19\cdot23\cdot29\cdot31\cdot37\cdot41\cdot43)(n+2)(n+4)\cdots(n+42)$\\
	45&$(7\cdot17\cdot19\cdot23\cdot29\cdot31\cdot37\cdot41\cdot43)(n+2)(n+4)\cdots(n+44)$\\
	47&$(7\cdot17\cdot19\cdot23\cdot29\cdot31\cdot37\cdot41\cdot43\cdot47)(n+2)(n+4)\cdots(n+46)$\\
	49&$(7\cdot17\cdot19\cdot23\cdot29\cdot31\cdot37\cdot41\cdot43\cdot47)(n+2)(n+4)\cdots(n+48)$\\
	51&$(3\cdot7\cdot19\cdot23\cdot29\cdot31\cdot37\cdot41\cdot43\cdot47)(n+2)(n+4)\cdots(n+50)$\\
	53&$(3\cdot7\cdot19\cdot23\cdot29\cdot31\cdot37\cdot41\cdot43\cdot47\cdot53)(n+2)(n+4)\cdots(n+52)$\\
	55&$(3\cdot5\cdot7\cdot11\cdot19\cdot23\cdot29\cdot31\cdot37\cdot41\cdot43\cdot47\cdot53)(n+2)(n+4)\cdots(n+54)$\\
	57&$(5\cdot7\cdot11\cdot23\cdot29\cdot31\cdot37\cdot41\cdot43\cdot47\cdot53)(n+2)(n+4)\cdots(n+56)$\\
	59&$(5\cdot7\cdot11\cdot23\cdot29\cdot31\cdot37\cdot41\cdot43\cdot47\cdot53\cdot59)(n+2)(n+4)\cdots(n+58)$\\
	\hline
\end{tabular}
\end{table}

Let $r$ be a fixed odd integer in the range $23\leq r \leq 60$. For any prime $p$, there are at most $\left\lfloor \frac{r-3}{2p}\right\rfloor +1$ terms in the set $\{n+2,n+4,\dots,n+r-1\}$ divisible by $p$. For each prime $7\leq p \leq r$ appearing in $\frac{x_{n,r}}{\prod\limits_{1\leq i\leq \frac{r-1}{2}}(n+2i)}$, we delete those terms in $\{n+2,n+4,\dots,n+r-1\}$ divisible by $p$. We find that there are at least $6$ terms in this set of the form $ax^2$ with $a\in\{1,3,5,15\}$ and it follows that there are two distinct terms in $\{n+2,n+4,\dots,n+r-1\}$ whose product is a square. This contradicts Lemma \ref{squares} for $m=n$ and $t=r-1$. Thus $x_{n,r}$ and hence $\Dt$ is not a square. We give the following three examples to illustrate this argument.

Let $r=23$. Then
$$\Dt \sim x_{n,r}= (3\cdot11\cdot13\cdot17\cdot19\cdot23)(n+2)(n+4)\cdots(n+22).$$
There are at most $5$ terms in $\{n+2,n+4,\dots,n+22\}$ which are divisible by $11,13,17,19$ or $23$. After removing these terms, we are left with at least $6$ terms each of the form $ax^2$ with $a\in\{1,3\}$. Therefore there are two distinct terms in $\{n+2,n+4,\dots,n+22\}$ whose product is a square. This contradicts Lemma \ref{squares} for $m=n$ and $t=r-1$. Therefore $\Dt$ is not a square.

Let $r=27$. Then
$$\Dt \sim x_{n,r}= (11\cdot13\cdot17\cdot19\cdot23)(n+2)(n+4)\cdots(n+26).$$
There are at most $4$ terms in $\{n+2,n+4,\dots,n+26\}$ which are divisible by $13,17,19$ or $23$ and further $11$ divides at most $2$ terms of this set. After removing these terms, we are left with at least $7$ terms in this set which are squares. This contradicts Lemma \ref{squares} for $m=n$ and $t=r-1$. Thus $x_{n,r}$ and hence $\Dt$ is not a square.

Let $r=37$. Then
$$\Dt \sim x_{n,r}= (3\cdot5\cdot7\cdot13\cdot17\cdot19\cdot23\cdot29\cdot31\cdot37)(n+2)(n+4)\cdots(n+36).$$
The number of terms in $\{n+2,n+4,\dots,n+36\}$ divisible by $7,13$ and $17$ are at most $3,2$ and $2$ respectively. Also each of $19,23,29,31$ and $37$ divides at most one term in this set. After removing these terms, we are left with at least $6$ terms in the set $\{n+2,n+4,\dots,n+36\}$ each of which is of the form $ax^2$ with $a\in \{1,3,5,15\}$ and it follows that there are two distinct terms in $\{n+2,n+4,\dots,n+36\}$ whose product is a square. We get a contradiction using Lemma \ref{squares} as above.\\

\noindent\textbf{Case 2.} $n$ is even:
We have
$$\Dt \sim (-1)^{n(n-1)/2}(1\cdot 3\cdot 5\cdots (n-1))(n+r-1)(n+r-3)\cdots(r+1).$$
If $n \equiv 2 \pmod 4$, then $\Dt$ is not a square.
Thus assume $n \equiv 0 \pmod 4$.

\noindent\textbf{Subcase 2(a).} r is odd:
By re-arranging the factors, we see that
$$\Dt \sim (1\cdot3\cdot5\cdots (r-2))(r(r+1)\cdots n)(n+2)(n+4)\cdots(n+r-1).$$

For $n > \frac{3(r-1)}{2}$, we have
	\begin{align*}
	\frac{n+r-1}{2}<\frac{5}{6}n.
	\end{align*}
By Lemma \ref{harborthkemnitz} with $x=\frac{5}{6}n$, there is a prime $p$ satisfying
	\begin{align*}
	\frac{n+r-1}{2}<\frac{5}{6}n<p<n
	\end{align*}
so that $\nu_p(\Dt)$ is odd, and hence $\Dt$ is not a square.

For $n \leq \frac{3(r-1)}{2}$ with $n\notin B_r$, we check directly that $\Dt$ is not a square.

\noindent\textbf{Subcase 2(b).} r is even:
By re-arranging the factors, we see that
\begin{align}
\Dt \sim (1\cdot3\cdot5\cdots (r-1))(n+1)(n+3)\cdots(n+r-1).  \label{d2}
\end{align}
If $n\leq1089$, then $n+r-1 \leq 1148$ and since there are at least 10 consecutive odd integers in $\{n+1,n+3,\dots,n+r-1\}$, it follows from Lemma \ref{primefact} that there is a prime $p$ in this set. We note that $n\notin B_r$ implies $n \geq r+4$ and thus we have
\begin{align*}
r\leq n+2\leq p \leq n+r-1<2n<2p.
\end{align*}
Thus $\nu_p(\Dt)$ is odd and hence $\Dt$ is not a square.

For fixed even $23\leq r\leq 60$, we focus on the expression on the right hand side of \eqref{d2} and find the squarefree integer $y_r$ such that 
\begin{align*}
1\cdot3\cdot5\cdots (r-1)= y_r \times \square.
\end{align*}
Thus for $x_{n,r}=y_r(n+1)(n+3)\cdots(n+r-1)$, we have
\begin{align*}
(1\cdot3\cdot5\cdots (r-1))(n+1)(n+3)\cdots(n+r-1) =x_{n,r} \times \square
\end{align*}
so that $\Dt \sim x_{n,r}$. We give the list of $x_{n,r}$ for even $r$ in the range $23\leq r\leq60$ in Table \ref{table2}.

\begin{table}[h]
\caption{List of $r$ and $x_{n,r}$ where $\Dt \sim x_{n,r}$  }
\label{table2}
\begin{tabular}{c l}
	\hline
	$r$ & \hspace{4 cm}$x_{n,r}$ \\
	\hline
	24 & $(3\cdot11\cdot13\cdot17\cdot19\cdot23)(n+1)(n+3)\cdots(n+23)$\\
	26& $(3\cdot11\cdot13\cdot17\cdot19\cdot23)(n+1)(n+3)\cdots(n+25)$\\
	28&$(11\cdot13\cdot17\cdot19\cdot23)(n+1)(n+3)\cdots(n+27)$\\
	30&$(11\cdot13\cdot17\cdot19\cdot23\cdot29)(n+1)(n+3)\cdots(n+29)$\\
	32&$(11\cdot13\cdot17\cdot19\cdot23\cdot29\cdot31)(n+1)(n+3)\cdots(n+31)$\\
	34&$(3\cdot13\cdot17\cdot19\cdot23\cdot29\cdot31)(n+1)(n+3)\cdots(n+33)$\\
	36&$(3\cdot5\cdot7\cdot13\cdot17\cdot19\cdot23\cdot29\cdot31)(n+1)(n+3)\cdots(n+35)$\\
	38&$(3\cdot5\cdot7\cdot13\cdot17\cdot19\cdot23\cdot29\cdot31\cdot37)(n+1)(n+3)\cdots(n+37)$\\
	40&$(5\cdot7\cdot17\cdot19\cdot23\cdot29\cdot31\cdot37)(n+1)(n+3)\cdots(n+39)$\\
	42&$(5\cdot7\cdot17\cdot19\cdot23\cdot29\cdot31\cdot37\cdot41)(n+1)(n+3)\cdots(n+41)$\\
	44&$(5\cdot7\cdot17\cdot19\cdot23\cdot29\cdot31\cdot37\cdot41\cdot43)(n+1)(n+3)\cdots(n+43)$\\
	46&$(7\cdot17\cdot19\cdot23\cdot29\cdot31\cdot37\cdot41\cdot43)(n+1)(n+3)\cdots(n+45)$\\
	48&$(7\cdot17\cdot19\cdot23\cdot29\cdot31\cdot37\cdot41\cdot43\cdot47)(n+1)(n+3)\cdots(n+47)$\\
	50&$(7\cdot17\cdot19\cdot23\cdot29\cdot31\cdot37\cdot41\cdot43\cdot47)(n+1)(n+3)\cdots(n+49)$\\
	52&$(3\cdot7\cdot19\cdot23\cdot29\cdot31\cdot37\cdot41\cdot43\cdot47)(n+1)(n+3)\cdots(n+51)$\\
	54&$(3\cdot7\cdot19\cdot23\cdot29\cdot31\cdot37\cdot41\cdot43\cdot47\cdot53)(n+1)(n+3)\cdots(n+53)$\\
	56&$(3\cdot5\cdot7\cdot11\cdot19\cdot23\cdot29\cdot31\cdot37\cdot41\cdot43\cdot47\cdot53)(n+1)(n+3)\cdots(n+55)$\\
	58&$(5\cdot7\cdot11\cdot23\cdot29\cdot31\cdot37\cdot41\cdot43\cdot47\cdot53)(n+1)(n+3)\cdots(n+57)$\\
	60&$(5\cdot7\cdot11\cdot23\cdot29\cdot31\cdot37\cdot41\cdot43\cdot47\cdot53\cdot59)(n+1)(n+3)\cdots(n+59)$\\
	\hline
\end{tabular}
\end{table}
Let $r$ be a fixed even integer in the range $23\leq r \leq 60$.  For any prime $p$, there are at most $\left\lfloor \frac{r-2}{2p}\right\rfloor +1$ terms in the set $\{n+1,n+3,\dots,n+r-1\}$ divisible by $p$. For each prime $7\leq p \leq r$ appearing in $\frac{x_{n,r}}{\prod\limits_{1\leq i\leq r/2}(n+2i-1)}$, we delete those terms in $\{n+1,n+3,\dots,n+r-1\}$ divisible by $p$. We find that there are at least $6$ terms in this set of the form $ax^2$ with $a\in\{1,3,5,15\}$ and it follows that there are two distinct terms in $\{n+1,n+3,\dots,n+r-1\}$ whose product is a square. This contradicts Lemma \ref{squares} for $m=n-1$ and $t=r$. Thus $x_{n,r}$ and hence $\Dt$ is not a square. We illustrate this argument for $r=36$.

Let $r=36$. Then
$$\Dt \sim x_{n,r}= (3\cdot5\cdot7\cdot13\cdot17\cdot19\cdot23\cdot29\cdot31)(n+1)(n+3)\cdots(n+35).$$
There are at most $4$ terms in $\{n+1,n+3,\dots,n+35\}$ which are divisible by $19,23,29$ or $31$ and further each of $13$ and $17$ divides at most $2$ terms of this set and $7$ divides at most $3$ terms of this set. After removing these terms, we are left with at least $7$ terms of the form $ax^2$ with $a\in \{1,3,5,15\}$ and it follows that there are two distinct terms in $\{n+1,n+3,\dots,n+35\}$ whose product is a square. This contradicts Lemma \ref{squares} for $m=n-1$ and $t=r$. Therefore $\Dt$ is not a square.

\qed

\section{Proof of Theorem \ref{mainthm2}} \label{proof of mainthm2}
Suppose that $\lnr$ has a factor of degree $k$. Then by Lemma \ref{rk1}, $k<\frac{r}{1.63}$. By Lemma \ref{degree}, we have $n_0 \leq k <\frac{r}{1.63}$. Thus if $p|n_0$, then $p^{\nu_p(n_0)}<r$ and in fact $p^{\nu_p(n)}=p^{\nu_p(n_0)}<r$. Also by Lemma \ref{ppower}, if $p|n_1$, then $p^{\nu_p(n)}\leq r$. Hence
	\begin{align*}
	n =n_0n_1 =\prod\limits_{p|n}p^{\nu_p(n)} \leq \prod\limits_{p\leq r} r = r^{\pi(r)} = e^{\pi(r) {\rm ~log~} r} \leq e^{r\left(1+\frac{1.2762}{\log  r}\right)}
	\end{align*}
by Lemma \ref{pi}.

It remains to show that if $n>e^{r\left(1+\frac{1.2762}{\log  r}\right)}$, then $G_n(r)$ contains $A_n$. By Lemma \ref{lemma for gp}$(ii)$, this is the case if
\begin{align}
e^{r\left(1+\frac{1.2762}{\log  r}\right)}> \max\{48-r,8+\frac{5r}{3}\} \label{eq3}
\end{align}
By Theorem \ref{mainthm1} and the results of Schur, Hajir, Sell, Nair and Shorey stated in the introduction, we may assume that $r \geq61$.  Then $\max\{48-r,8+\frac{5r}{3}\}=8+\frac{5r}{3}<2r$. From $e^x > \frac{x^2}{2}$ for $x>0$, we have $e^{r\left(1+\frac{1.2762}{\log  r}\right)}>\frac{r^2}{2}>2r$ and hence the assertion \eqref{eq3} follows. This proves Theorem \ref{mainthm2}.
\qed

\section*{Acknowlegdements}
The second author like to thank the funding agencies DST, India and DRDO, India for supporting this work under DST Fast Track Project and DRDO CARS Project. The authors are also thankful to the anonymous referee for carefully analyzing the article and for giving kind and thoughtful remarks on an earlier version of the paper.


\begin{thebibliography}{100}
    \bibitem{bffl} P. Banerjee, M. Filaseta, C. E. Finch and J. R. Leidy, \emph{On classifying Laguerre polynomials which have Galois group the alternating group},
      J. Th\'{e}or. Nombres Bordeaux, {\bf 25} (2013), 1--30.

	\bibitem{coleman1987} R. F. Coleman, {\it On the Galois groups of the exponential Taylor polynomials}, L'Enseignement Math. 33 (1987), 183-189.
	
	\bibitem{dumas} G. Dumas, {\it Sur quelques cas d'irr\'{e}ductibilit\'{e} des polyn\^{o}mes \'{a} coefficients rationnels}, Journal de Math. Pure et Appl. 2 (1906), 191-258.
	
	\bibitem{dusart} P. Dusart, {\it In\'egalit\'es explicites pour $\psi(x),~ \theta(x),~ \pi(x)$ et les nombres premiers}, C. R. Math. Acad. Sci. Soc. R.
	Canada 21 (1999), 53-59.

	\bibitem{filaseta1995} M. Filaseta, {\it The irreducibility of all but finitely many Bessel polynomials}, Acta Math. 174 (1995), 383-397.
	
	\bibitem{filaseta_trifonov2002} M. Filaseta and O. Trifonov, {\it The Irreducibility of the Bessel polynomials}, J. Reine Angew. Math. 550 (2002), 125-140.
	
    \bibitem{fkt} M. Filaseta, T. Kidd and O. Trifonov, \emph{Laguerre polynomials with Galois group $A_m$ for each $m$}, J. Number Theory, {\bf 132} (2012), no. 4, 776--805.

	\bibitem{grosswald} E. Grosswald, {\it Bessel Polynomials}, Lecture Notes in Math. 698, Springer, Berlin, 1978.
	
	\bibitem{hajir1995} F. Hajir, {\it Some $\tilde{A_n}$-extensions obtained from generalized Laguerre polynomials}, J. Number Theory 50 (1995), 206-212.
		
	\bibitem{hajir2005} F. Hajir, {\it On the Galois group of generalized Laguerre Polynomials}, J. Th\'{e}or. Nombres Bordeaux, 17(2005), no. 2, 517-525.
	
	\bibitem{hajir2009} F. Hajir, {\it Algebraic properties of a family of generalized Laguerre polynomials}, Canad. J. Math., 61 (2009), 583-603.
	
	\bibitem{harborthkemnitz} H. Harborth and A. Kemnitz, {\it Calculations for Bertrands postulate}, Math. Mag., 54 (1981), 33-34.
	
	\bibitem{legendre} A. M. Legendre, {\it Essai sur la Th\'eorie des Nombres}, Paris, 1808.
	
	\bibitem{nair} S. G. Nair and T.N Shorey, {\it Irreducibility of Laguerre Polynomial $L_{n}^{(-1-n-r)}(x)$ }, Indagationes Mathematicae, 26(2015), 615-625.
	
	\bibitem{sell2004} E. A. Sell, {\it On a certain family of generalized Laguerre polynomials}, J. Number Theory 107 (2004), 266-281.
	
	\bibitem{schur1973a} I. Schur, {\it Gleichungen Ohne Affekt. In: Gesammelte Abhandlungen}, Band III, Springer-Verlag, Berlin-New York, (1973), 191-197.
		
	\bibitem{schur1973b} I. Schur, {\it Affektlose Gleichungen in der Theorie der Laguerreschen und Hermiteschen Polynome In: Gesammelte Abhandlungen}, Band III, Springer-Verlag, Berlin-New York, (1973), 227-233.
\end{thebibliography}
\end{document}